\newtheorem{theo}{Theorem}[section]
\newtheorem{lem}[theo]{Lemma}
\newtheorem{cor}[theo]{Corollary}
\theoremstyle{definition}
\newtheorem{defi}{Definition}
\theoremstyle{remark}
\newtheorem{rem}[theo]{Remark}
\numberwithin{equation}{section}
\def\ee{\varepsilon}
\def\vv{\varphi}
\def\la{\langle}
\def\ra{\rangle}
\def\Qbar{\hbox{\sl Q\kern-.45em{\vrule height.63em width.05em depth-.033em}}~}
\def\hrf{\hbox to.75in{\hrulefill}}
\def\norm#1{\left\Vert#1\right\Vert}
\def\tx{\widetilde{x}}
\def\tvv{\widetilde{\vv}}
\def\hvv{\widehat{\vv}}
\def\hx{\widehat{x}}
\def\tT{\widetilde{T}}
\def\weak{{\omega}}
\def\arg{{\rm arg}}
\title{The Bishop-Phelps-Bollob\'as property for numerical radius in $\ell_1(\mathbb{C})$}
\author{Antonio J. Guirao and Olena Kozhushkina}
\thanks{The research of the first named author was supported in part by MICINN and FEDER (project MTM2008-05396), by Fundaci\'{o}n S\'{e}neca (project 08848/PI/08), by Generalitat Valenciana (GV/2010/036), and by Universidad Polit\'ecnica de Valencia (project PAID-06-09-2829). The research of the second named author is supported by Kent State University}
\address{IUMPA, Universidad Polit\'ectnica de Valencia, 46022, Valencia, Spain}\email{anguisa2@mat.upv.es}
\address{{Dept.} of Mathematical Sciences, Kent State University, Kent OH 44242, USA}\email{okozhush@math.kent.edu}
\subjclass[2010]{46B20, 47A12}
\keywords{Norm attaining; Bishop-Phelps-Bollob\'as theorem; numerical radius attaining operators.}
\begin{document}

\begin{abstract}
We show that the set of bounded linear operators from $X$ to $X$ admits a Bishop-Phelps-Bollob\'as type theorem for numerical radius whenever $X$ is $\ell_1(\mathbb{C})$ or $c_0(\mathbb{C})$. As an essential tool we provide two constructive versions of the classical Bishop-Phelps-Bollob\'as theorem for $\ell_1(\mathbb{C})$.\end{abstract}

\maketitle

\section{Introduction}

The Bishop-Phelps theorem states that {norm attaining} functionals on a Banach space $X$ are dense in its dual space $X^*$. In 1970, B. Bollob\'as extended this result in a quantitative way in order to work on problems related to the numerical range of an operator~\cite{Bollob}. One of the versions of his extension is presented below:

\begin{theo}\label{BPB} Let $X$ be a Banach space. Given $\ee>0$, if $x\in X$, $x^*\in {X^*}$ with $\norm{x}=\norm{x^*}=1$ and $x^*(x)\geq1-\frac{\ee^2}{2}$, then {there exist} elements $x_0\in X$ and $x^*_0\in {X^*}$ such that $\norm{x_0}=\norm{x^*_0}= x^*_0(x_0)=1$,
    \[
       \norm{x-x_0}\leq\ee  \text{ and } \|x^*-x^*_0\|\leq\ee.
    \]
\end{theo}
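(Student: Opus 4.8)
The plan is to derive the theorem from a single application of the Br{\o}ndsted--Rockafellar variational principle, applied not to the indicator function of the unit ball but to the norm $\varphi=\|\cdot\|$ itself; this choice is what makes both perturbation bounds come out sharp. The starting observation is that the hypothesis is exactly a statement about the $\ee$-subdifferential of $\varphi$ at $x$: since $\|x\|=\|x^*\|=1$ and $\mathrm{Re}\,x^*(x)\ge 1-\ee^2/2$, for every $y\in X$ we have
\[
  \|y\|-\mathrm{Re}\,x^*(y)\ \ge\ 0\ \ge\ 1-\mathrm{Re}\,x^*(x)-\tfrac{\ee^2}{2},
\]
and rearranging (using $\varphi(x)=1$) gives $\varphi(y)\ge\varphi(x)+\mathrm{Re}\,x^*(y-x)-\tfrac{\ee^2}{2}$ for all $y$, i.e.\ $x^*\in\partial_{\ee^2/2}\varphi(x)$. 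In the complex case all subdifferentials are understood in the usual real-part sense, and the isometric identification $x^*\mapsto\mathrm{Re}\,x^*$ between the complex dual and the dual of the underlying real space lets us pass freely between the two settings while preserving norms and distances, so there is no loss in what follows.

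Next I would apply the Br{\o}ndsted--Rockafellar theorem to $\varphi$ at $x$ with $\delta=\ee^2/2$ and free parameter $\lambda=\ee$, which yields $x_0\in X$ and $x_0^*\in\partial\varphi(x_0)$ with $\|x_0-x\|\le\delta/\lambda=\ee/2$ and $\|x_0^*-x^*\|\le\lambda=\ee$. (We may assume $0<\ee<2$; for $\ee\ge 2$ the statement is immediate, since by the Bishop--Phelps theorem there is a normalized pair $(u,u^*)$ with $u^*(u)=1$, and it trivially lies within distance $2\le\ee$ of the given data.) Unwinding the membership $x_0^*\in\partial\|\cdot\|(x_0)$ gives $\mathrm{Re}\,x_0^*(x_0)=\|x_0\|$ and $\|x_0^*\|\le 1$; since $\|x_0\|\ge\|x\|-\ee/2>0$, the chain $\|x_0\|=\mathrm{Re}\,x_0^*(x_0)\le\|x_0^*\|\,\|x_0\|$ forces $\|x_0^*\|\ge 1$, hence $\|x_0^*\|=1$. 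Thus the dual component already has the required properties.

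It then remains only to renormalize the point. Put $y_0=x_0/\|x_0\|$, so $\|y_0\|=1$ and, crucially,
\[
  \|x_0-y_0\|=\bigl|\,\|x_0\|-1\,\bigr|=\bigl|\,\|x_0\|-\|x\|\,\bigr|\le\|x_0-x\|\le\tfrac{\ee}{2},
\]
whence $\|x-y_0\|\le\|x-x_0\|+\|x_0-y_0\|\le\ee$. Moreover $\mathrm{Re}\,x_0^*(y_0)=\|x_0\|/\|x_0\|=1=\|x_0^*\|=\|y_0\|$, and since $|x_0^*(y_0)|\le 1$, a complex number with real part $1$ and modulus at most $1$ must equal $1$, so $x_0^*(y_0)=1$. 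Hence $(y_0,x_0^*)$ is the pair required by the theorem, with $\|x-y_0\|\le\ee$ and $\|x^*-x_0^*\|\le\ee$.

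The only delicate point---the real ``obstacle''---is the constant bookkeeping: getting both perturbations to be at most $\ee$ (rather than some fixed multiple of $\ee$) hinges on applying the variational principle to $\|\cdot\|$, so that the perturbed subgradient automatically has norm $1$ and costs nothing extra, and on the hypothesis being quadratic in $\ee$, which is exactly what permits the balanced split $\tfrac{\ee}{2}+\tfrac{\ee}{2}$ in the final estimate. If one prefers a self-contained treatment, the Br{\o}ndsted--Rockafellar step can be replaced by applying Ekeland's variational principle directly to the nonnegative convex function $y\mapsto\|y\|-\mathrm{Re}\,x^*(y)$ (which vanishes at $0$ and is at most $\ee^2/2$ at $x$) and then extracting $x_0^*$ from the Moreau--Rockafellar sum rule at the Ekeland minimizer; the arithmetic is identical.
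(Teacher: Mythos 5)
Your proof is correct. Be aware, though, that the paper does not actually prove Theorem~\ref{BPB}: it quotes it as Bollob\'as's classical result and explicitly remarks that the known proofs ``have an existence nature'', being based on Hahn--Banach, Ekeland, or the Br{\o}ndsted--Rockafellar principle. Your argument is precisely one of these: a single application of Br{\o}ndsted--Rockafellar to $\varphi=\|\cdot\|$ with the split $\delta=\ee^2/2$, $\lambda=\ee$. The details check out --- the hypothesis really does say $x^*\in\partial_{\ee^2/2}\varphi(x)$, the subgradient of the norm at the nonzero point $x_0$ is automatically of norm one, the renormalization costs $\bigl|\,\norm{x_0}-1\,\bigr|\le\norm{x-x_0}\le\ee/2$ so the point budget closes at $\ee/2+\ee/2=\ee$, and the degenerate case $\ee\ge 2$ is disposed of correctly. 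What the paper does instead, for the special case $X=\ell_1(\mathbb{C})$, is give \emph{explicit formulas} for the approximating pair (Theorems~\ref{BPB-l1} and~\ref{BPB-l1-improv}): $x_0$ is a renormalized truncation of $x$ to the set $\mathcal{P}_{(x,\vv)}(\ee^2/2)$ and $\vv_0$ is an explicit modification of $\vv$ on $\mathcal{A}_\vv(\ee^2/20)$, at the price of a stronger hypothesis ($1-\ee^3/4$ or $1-\ee^3/60$ instead of $1-\ee^2/2$). That constructive information --- in particular that $\vv_0$ depends only on $\vv$ and $\ee$ and satisfies $\pi_1(\vv)\subset\pi_1(\vv_0)$ --- is exactly what the proofs of Theorems~\ref{BPBP-l1} and~\ref{theo:BPBPc0} consume. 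So your argument establishes the stated theorem by a genuinely different (nonconstructive) route, but it could not be substituted for the constructive versions on which the rest of the paper depends.
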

However, the known proofs of this fact {have} an existence nature --they are based on Hahn-Banach extension theorem, the Ekeland variational principle or Br\o ndsted-Rockafellar {principle}. In this paper we construct, as {a} necessary tool for our main results, explicit expressions of the approximating pair $(x_0,x^*_0)$ when $X=\ell_1(\mathbb{C})$ --see Theorems~\ref{BPB-l1} and \ref{BPB-l1-improv}.

Paralleling the research of {norm attaining} operators initiated by Lindenstrauss in \cite{Lindens}, B. Sims raised the question of the norm denseness of the set of numerical radius attaining operators --see \cite{Sims}. Partial positive results have been proved. We emphasize for their importance the results of M. Acosta in her Ph. D. thesis~\cite{AcostaThesis}, where a systematic study of the problem {was initiated}, {the renorming result in~\cite{ARenorming},} and joint findings of this author with R. Pay\'{a}~\cite{APsecondadj, APRNP}.  Prior to them, I. Berg and B. Sims in~\cite{BergSims} gave a positive answer for uniformly convex spaces and C. S. Cardassi obtained positive answers for $\ell_1$, $c_0$, $C(K)$, $L_1(\mu)$, and uniformly smooth spaces~\cite{Card-USm,Card-c0,Card-CK}.

Using a renorming of $c_0$, R. Pay\'{a} provided an example of a Banach space $X$ such that the set of numerical radius attaining operators on $X$ is not norm dense, answering in the negative Sims' question --see~\cite{Paya}. In the same year, M. Acosta, F. Aguirre, and R. Pay\'{a} in~\cite{AAP} {gave another counterexample: $X=\ell_2 \oplus_{\infty}G$, where $G$ is the Gowers space.}

Recently, M. Acosta \emph{et al.} studied in~\cite{AAGM} a new property, called the \emph{Bishop-Phelps-Bollob\'as property for operators}, BPBp for short. A pair of Banach spaces $(X,Y)$ has the BPBp if a {``Bishop-Phelps-Bollob\'as''} type theorem can be proved for the {set of} operators from $X$ to $Y$. This property implies, in particular, that the norm attaining operators from $X$ to $Y$ are dense in the whole space of continuous linear operators $\mathfrak{L}(X,Y)$. However, { as shown in~\cite{AAGM}, the converse is not true}. Consequently, the BPB property is more than a quantitative tool for studying the density of norm attaining operators.

We investigate here an {analogue of} the Bishop-Phelps-Bollob\'as property for operators but in relation with {numerical} radius attaining operators. { We call it} the \emph{Bishop-Phelps-Bollob\'as property for numerical radius}{, BPBp-$\nu$ for short}. The relation between norm attaining and numerical radius attaining operators is far from being clear, although the existence of an interconnection is evident. Accordingly, our {goals} in this paper {are} to define this new property --see Definition~\ref{BPBp} below-- and to show that $\ell_1(\mathbb{C})$ and $c_0(\mathbb{C})$ satisfy it --see Theorems~\ref{BPBP-l1} and ~\ref{theo:BPBPc0}. This brings an extension {as well as} a quantitative version of C. S. Cardassi's results in~\cite{Card-c0}.

Observe that the counterexamples provided in~\cite{AAP} and~\cite{Paya} imply, in particular, that there exist Banach spaces {failing} the Bishop-Phelps-Bollob\'as property for numerical radius.

Given a Banach space $(X,\norm{\cdot})$, we denote as usual by $S_X$ and $B_X$, respectively, the unit sphere and the unit ball of $X${. By $X^*$ we represent its dual, endowed with its standard norm $\norm{x^*}=\sup_{x\in B_X}\{|x^*(x)|\}$} and by $\Pi(X)$ the set
\[
\Pi(X)=\{(x,x^*)\in S_X\times S_{X^*}\colon \;x^*(x)=1\}.
\]
{Given} $x\in S_X$ and $x^*\in S_{X^*}$, we set
\begin{equation*}\label{eq:pi1}
{\pi_1(x^*):=\{x\in S_X\colon \;x^*(x)=1\}.}
\end{equation*}
By $\mathfrak{L}(X)$ we mean the Banach space of all linear and continuous operators from $X$ into $X$ endowed with its natural norm $\norm{T}=\sup_{x\in B_X}\{\norm{Tx}\}$. For a given $T\in \mathfrak{L}(X)$, its \emph{numerical radius} $\nu(T)$ is defined by
\[
\nu(T)=\sup\{|x^*(Tx)|\colon \; (x,x^*)\in\Pi(X)\}.
\]

It is well known that the numerical radius of a Banach space $X$ is a continuous seminorm on $X$ which is, in fact, an equivalent norm when $X$ is complex. In general, there exists a constant $n(X)$, called the \emph{numerical index} of $X${, such that}
\[
n(X)\norm{T}\leq\nu(T)\leq\norm{T},\text{ for all }T\in\mathfrak{L}(X).
\]

Our interest {in this paper} is {in} spaces of numerical index $1$, {$n(X)=1$}, where the norm and the numerical radius {coincide.}
For background in numerical radius {we refer to the monographs~\cite{BD1,BD2} and in numerical index we refer to the survey~\cite{Kad-Mart-Paya}.}

{We} say that $T\in\mathfrak{L}(X)$ attains its numerical radius if there exists $(x,x^*)\in\Pi(X)$ such that $|x^*(Tx)|=\nu(T)$. The set of numerical radius attaining operators will be denoted by $\rm{NRA}(X)\subset\mathfrak{L}(X)$.

\begin{defi}[BPBp-$\nu$]\label{BPBp}
A Banach space $X$ is said to have the \emph{Bishop-Phelps-Bollob\'as property for numerical radius} if {for every {$0<\varepsilon<1$}, there exists $\delta>0$} such that for {a given} $T\in\mathfrak{L}(X)$ with $\nu(T)=1$ {and a} pair $(x,x^*)\in\Pi(X)$ {satisfying $|x^*(Tx)|\geq1-\delta$}, there exist $S\in \mathfrak{L}(X)$ with $\nu(S)=1$,
and a pair $(y,y^*)\in\Pi(X)$ such that
\begin{equation}\label{eq:defBPBpnu}
{\nu(T-S)\leq\ee,\,\, \norm{x-y}\leq\ee,\,\, \norm{x^*-y^*}\leq\ee\,\,\text{ and }\,\,|y^*(Sy)|=1.}
\end{equation}
\end{defi}

Observe that if $X$ is a Banach space with {$n(X)=1$, then} the seminorm {$\nu(\cdot)$} can be replaced by $\|\cdot\|$ in the definition above. {Note that all} the spaces studied in this paper {have} numerical index $1$.

\subsection*{Notation and terminology.}

Throughout this paper $\arg(\cdot)$ stands for the function which sends a non zero complex number $z$ to the unique  $\arg(z)\in[0,2\pi)$ such that $z=|z|e^{\arg(z)i}$. For convenience we extend the function to $\mathbb{C}$ by writing $\arg(0)=0$. Following the standard notation, let ${\rm Re}(z)$ and ${\rm Im}(z)$ be, respectively, the real and imaginary part of {the} complex number $z\in\mathbb{C}$.

{All along sections \ref{sec:ConstructiveVersions} to \ref{sec:BPBp-nuProperty-c0}, the} spaces $\ell_1$, $\ell_\infty$, and $c_0$ {stand} respectively for $\ell_1(\mathbb{C})$, $\ell_\infty(\mathbb{C})$, and $c_0(\mathbb{C})$. The standard basis of $\ell_1$ is denoted by $\{e_n\}_{n\in\mathbb{N}}$, and its biorthogonal functionals by $\{e_n^*\}_{n\in\mathbb{N}}$. Given a sequence $\xi=(\xi_j)_{j\in\mathbb{N}}{\in\mathbb{C}^{\mathbb{N}}}$ and a complex function $f\colon \mathbb{C}\to\mathbb{C}$ we write $f(\xi)$ meaning the sequence $(f(\xi_j))_{j\in\mathbb{N}}$.

The following sets will be of help in the formulation of the results and proofs.  Given $x=(x_j)_{{j\in\mathbb{N}}}\in\ell_1$, $\vv=(\vv_j)_{{j\in\mathbb{N}}}\in\ell_\infty$ {we define}
\begin{align}
\mathcal{N}_{(x,\,\vv)}&=\{j\in\mathbb{N}\colon \;\vv_j\, x_j=|x_j|\},\label{eq:Nset}\\
{\rm supp}(x)&=\{j\in\mathbb{N}\colon \,|x_j|\neq 0\}.\nonumber
\end{align}
For {$r>0$} we {consider}
\begin{align}
\mathcal{A}_\vv({r})&=\{j\in\mathbb{N}\colon \, |\vv_j|\geq1-{r}\},\label{eq:A}\\
\mathcal{P}_{(x,\vv)}({r})&=\{j\in{\rm supp}(x)\colon \, {\rm Re}(\vv_j\, x_j)\geq(1-{r})|x_j|\}.\label{eq:P}
\end{align}
Observe that $\mathcal{P}_{(x,\vv)}({r})\subset\mathcal{A}_\vv({r})$ and that if $x_j\geq 0$ for all $j\in\mathbb{N}$ --we describe this situation saying that $x$ is \emph{positive}-- then
\begin{equation*}\label{eq:P+positive}
\mathcal{P}_{(x,\vv)}({r})=\{j\in{\rm supp}(x)\colon \, {\rm Re}(\vv_j)\geq(1-{r})\}.
\end{equation*}

For a given set $\Gamma$, a subset $A\subset\Gamma$ and $\mathbb{K}\in\{\mathbb{R},\mathbb{C}\}$, we denote by $\mathbbm{1}_A$ the characteristic function of $A$, that is, the element in $\mathbb{K}^\Gamma$ such that $(\mathbbm{1}_A)_\gamma=1$ if $\gamma\in A$ and $(\mathbbm{1}_A)_\gamma=0$ otherwise.

\section{The Bishop-Phelps-Bollob\'as theorem in $\ell_1(\mathbb{C})$}\label{sec:ConstructiveVersions}

In this section we present two constructive versions of Theorem~\ref{BPB}, which are the main tool in the proof of our Theorems~\ref{BPBP-l1} and~\ref{BPBP-l1-strong}.
\begin{lem}\label{characteriz}
Let $(x,\vv)\in S_{\ell_1}\times S_{\ell_\infty}$. Then $x\in\pi_1(\vv)$ if and only if $\mathcal{N}_{(x,\vv)}=\mathbb{N}$.
\end{lem}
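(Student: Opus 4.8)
The plan is to unwind the definitions on both sides and observe that the identity $\vv(x)=1$ forces each coordinate term $\vv_j x_j$ to contribute its full modulus $|x_j|$ to the sum. First I would write out the pairing explicitly: since $x=(x_j)\in\ell_1$ and $\vv=(\vv_j)\in\ell_\infty$, we have $\vv(x)=\sum_{j\in\mathbb{N}}\vv_j x_j$, and the elementary estimate $|\vv(x)|\leq\sum_j|\vv_j||x_j|\leq\|\vv\|_\infty\sum_j|x_j|=\|x\|_1=1$ is always available. The statement $x\in\pi_1(\vv)$ means $\vv(x)=1$, i.e.\ equality is attained throughout this chain.

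For the forward implication, I would argue that $\vv(x)=1$ (a real number) combined with $\sum_j|\vv_j x_j|\leq\sum_j|x_j|=1$ gives $1=\vv(x)=\mathrm{Re}\,\vv(x)=\sum_j\mathrm{Re}(\vv_j x_j)\leq\sum_j|\vv_j x_j|\leq\sum_j|x_j|=1$. Hence every inequality is an equality term-by-term: $\mathrm{Re}(\vv_j x_j)=|\vv_j x_j|=|x_j|$ for all $j$. The first equality says $\vv_j x_j$ is a nonnegative real, the second (together with it) says $\vv_j x_j=|\vv_j x_j|=|x_j|$, which is exactly the condition $j\in\mathcal{N}_{(x,\vv)}$; thus $\mathcal{N}_{(x,\vv)}=\mathbb{N}$. (One should be slightly careful about indices $j$ with $x_j=0$: there $\vv_j x_j=0=|x_j|$ trivially, so they lie in $\mathcal{N}_{(x,\vv)}$ as well.) For the converse, if $\mathcal{N}_{(x,\vv)}=\mathbb{N}$ then $\vv_j x_j=|x_j|$ for every $j$, so $\vv(x)=\sum_j\vv_j x_j=\sum_j|x_j|=\|x\|_1=1$, i.e.\ $x\in\pi_1(\vv)$.

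There is essentially no obstacle here; the only point requiring a moment's care is the passage from $\mathrm{Re}(\vv_j x_j)\leq|\vv_j x_j|\leq|x_j|$ summing to an equality to the conclusion that equality holds in each summand, which is the standard fact that a convergent series of nonnegative terms bounded above by another such series can equal it only if the terms agree — or more simply, $\sum_j(|x_j|-\mathrm{Re}(\vv_j x_j))=0$ with every summand $\geq 0$ forces each summand to vanish. From $|x_j|=\mathrm{Re}(\vv_j x_j)$ and $|\mathrm{Re}(\vv_j x_j)|\leq|\vv_j x_j|\leq|x_j|$ one then reads off $\vv_j x_j=|x_j|$. I would keep the write-up to a few lines, since this lemma is purely a reformulation that will be invoked repeatedly in the constructive arguments that follow.
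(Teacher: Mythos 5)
Your argument is correct and follows essentially the same route as the paper: the converse by direct summation, and the forward direction by squeezing $1=\mathrm{Re}(\vv(x))=\sum_j\mathrm{Re}(\vv_j x_j)\leq\sum_j|\vv_j x_j|\leq\sum_j|x_j|=1$ and forcing term-by-term equality. Your extra remarks on the indices with $x_j=0$ and on why the summand-wise equality follows are fine but not needed beyond what the paper already records.
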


\begin{proof}
Given a pair $(x,\vv)\in S_{\ell_1}\times S_{\ell_\infty}$ satisfying $\mathcal{N}_{(x,\vv)}=\mathbb{N}$, one can compute $\vv(x)=\sum_{j\in\mathbb{N}} \vv_j\, x_j\stackrel{\eqref{eq:Nset}}{=}\sum_{j\in\mathbb{N}} |x_j|=\norm{x}=1$,
which implies that $(x,\vv)\in\Pi(\ell_1)$.

Conversely, let us assume that $(x,\vv)\in\Pi(\ell_1)$ then,
\[
1={\rm Re}(\vv(x))=\sum_{j\in\mathbb{N}} {\rm Re}(\vv_j\, x_j)\leq\sum_{j\in\mathbb{N}} |\vv_j\, x_j|\leq\sum_{j\in\mathbb{N}} |x_j|=1,
\]
which implies that ${\rm Re}(\vv_j\, x_j)=|\vv_j\, x_j|=|x_j|\mbox{ for } j\in\mathbb{N}$. Therefore, $\vv_j\, x_j=|x_j|$ for {every} $j\in\mathbb{N}$, which finishes the proof.
\end{proof}

Lemma~\ref{characteriz} provides the essential insight into the properties of $\Pi(\ell_1)$ that we need for the proof of Theorems~\ref{BPB-l1} and~\ref{BPB-l1-improv}. {A glance at} Lemma~\ref{characteriz}  gives the following easy result {regarding the norm attaining functionals on $\ell_1$, ${\rm NA}(\ell_1)$.}

\begin{cor}
$\rm{NA}(\ell_1)=\{\vv\in\ell_\infty\colon \, \exists n\in\mathbb{N}\text{ with }|\vv_n|=\|\vv\|\}$.
\end{cor}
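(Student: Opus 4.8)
The plan is to read off this statement from Lemma~\ref{characteriz}, so the proof will be short. Throughout I normalize to $\norm{\vv}=1$; the case $\vv=0$ is trivial, since then $0$ attains its norm and $|\vv_n|=0=\norm{\vv}$ for every $n$.

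For the inclusion $\mathrm{NA}(\ell_1)\subseteq\{\vv\in\ell_\infty\colon\exists n,\ |\vv_n|=\norm{\vv}\}$, I would start from $\vv\in\ell_\infty$ with $\norm{\vv}=1$ attaining its norm at some $x\in S_{\ell_1}$, so $|\vv(x)|=1$. Multiplying $x$ by a suitable unimodular scalar (which does not change $\norm{x}$ or $\supp(x)$) I may assume $\vv(x)=1$, i.e.\ $(x,\vv)\in\Pi(\ell_1)$. Lemma~\ref{characteriz} then gives $\mathcal{N}_{(x,\vv)}=\mathbb{N}$, that is $\vv_j\,x_j=|x_j|$ for all $j$. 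Since $x\neq 0$, pick any $n\in\supp(x)$; then $\vv_n=|x_n|/x_n$, hence $|\vv_n|=1=\norm{\vv}$.

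For the reverse inclusion, suppose $|\vv_n|=\norm{\vv}=1$ for some $n\in\mathbb{N}$, and set $x=\overline{\vv_n}\,e_n\in S_{\ell_1}$. I would check directly that $\mathcal{N}_{(x,\vv)}=\mathbb{N}$: for $j=n$ one has $\vv_n x_n=\vv_n\overline{\vv_n}=|\vv_n|^2=1=|x_n|$, while for $j\neq n$ both sides vanish. By Lemma~\ref{characteriz}, $(x,\vv)\in\Pi(\ell_1)$, so $|\vv(x)|=\vv(x)=1=\norm{\vv}$ and $\vv$ attains its norm. Alternatively one simply computes $\vv(x)=\vv_n\overline{\vv_n}=1$ without invoking the lemma.

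I do not expect any real obstacle here: the only points requiring a line of care are the unimodular rotation used to pass from $|\vv(x)|=1$ to a genuine element of $\Pi(\ell_1)$, and the separate treatment of $\vv=0$ so that the displayed characterization is literally correct (with the convention that $|\vv_n|=\norm{\vv}$ is then satisfied for every $n$).
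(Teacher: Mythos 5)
Your proof is correct and is exactly the argument the paper intends: the corollary is stated in the paper without a written proof, as an immediate consequence of Lemma~\ref{characteriz}, and your two inclusions (rotating $x$ to land in $\Pi(\ell_1)$ and then reading off $|\vv_n|=\|\vv\|$ on the support, versus testing against $\overline{\vv_n}\,e_n$) are the natural way to make that ``glance'' explicit. The normalization to $\|\vv\|=1$ and the separate remark about $\vv=0$ are both appropriate, since Lemma~\ref{characteriz} is stated only for unit vectors.
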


The following lemma is an adaptation of~\cite[Lemma~3.3]{AAGM} to our notation.

\begin{lem}\label{key-lemma}
Let $(x,\vv)\in B_{\ell_1}\times B_{\ell_\infty}$ and $0<\delta<1$ such that $\vv(x)\geq1-\delta$. Then, for every $\delta<r<1$ we have $\big\|{\rm Re}\big({e^{\arg(\vv)i}\,x}\big)\cdot\mathbbm{1}_{\mathcal{P}_{(x,\vv)}(r)}\big\|\geq 1-(\delta/r)$.
\end{lem}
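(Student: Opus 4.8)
The plan is to estimate the defining series $\vv(x)=\sum_{j}\vv_j x_j$ by splitting $\mathbb{N}$ into $P:=\mathcal{P}_{(x,\vv)}(r)$ and its complement, deriving a lower bound for the mass of $\mathrm{Re}(\vv_jx_j)$ carried by $P$, and then transferring that bound to the sequence $e^{\arg(\vv)i}x$.

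First I would record the two elementary bounds on the summands of ${\rm Re}(\vv(x))=\sum_j {\rm Re}(\vv_j x_j)$ (the series is absolutely convergent since $x\in\ell_1$, $\vv\in\ell_\infty$). Put $S:=\sum_{j\in P}{\rm Re}(\vv_j x_j)$ and $T:=\sum_{j\in P}|x_j|$. On $P$ one has $0\le {\rm Re}(\vv_j x_j)\le|x_j|$, hence $0\le S\le T$; off $P$, either $j\notin\supp(x)$, in which case ${\rm Re}(\vv_j x_j)=0=(1-r)|x_j|$, or $j\in\supp(x)$ and \eqref{eq:P} gives ${\rm Re}(\vv_j x_j)<(1-r)|x_j|$, so in all cases ${\rm Re}(\vv_j x_j)\le(1-r)|x_j|$ for $j\notin P$. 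Therefore
\[
\sum_{j\notin P}{\rm Re}(\vv_j x_j)\le(1-r)\sum_{j\notin P}|x_j|\le(1-r)(1-T)\le(1-r)(1-S),
\]
and, adding the two parts and invoking the hypothesis $1-\delta\le {\rm Re}(\vv(x))$, we get $1-\delta\le S+(1-r)(1-S)=rS+(1-r)$, i.e. $S\ge 1-\delta/r$ (which is strictly positive because $\delta<r$).

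It then remains to replace $\vv$ by $e^{\arg(\vv)i}$ on $P$. For $j\in P$ we have $|x_j|\neq0$, and consequently $|\vv_j|\neq0$ as well (otherwise \eqref{eq:P} would force $|x_j|=0$); writing $\vv_j=|\vv_j|e^{\arg(\vv_j)i}$ yields ${\rm Re}(\vv_j x_j)=|\vv_j|\,{\rm Re}\big(e^{\arg(\vv_j)i}x_j\big)$, so that ${\rm Re}\big(e^{\arg(\vv_j)i}x_j\big)\ge0$ and, since $|\vv_j|\le1$, ${\rm Re}\big(e^{\arg(\vv_j)i}x_j\big)\ge {\rm Re}(\vv_j x_j)$. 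Summing over $j\in P$,
\[
\big\|{\rm Re}\big(e^{\arg(\vv)i}x\big)\cdot\mathbbm{1}_{P}\big\|=\sum_{j\in P}\big|{\rm Re}\big(e^{\arg(\vv_j)i}x_j\big)\big|\ \ge\ \sum_{j\in P}{\rm Re}(\vv_j x_j)=S\ \ge\ 1-\delta/r,
\]
which is exactly the assertion.

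I do not expect a genuine obstacle here: the whole argument is a short chain of elementary inequalities in the spirit of \cite[Lemma~3.3]{AAGM}. The only points that call for a little care are the bookkeeping over $\mathbb{N}\setminus P$ — making sure the bound ${\rm Re}(\vv_j x_j)\le(1-r)|x_j|$ also holds at indices outside $\supp(x)$ — and the remark that $|\vv_j|$ does not vanish on $P$, which is precisely what makes the passage from $\vv_j$ to $e^{\arg(\vv_j)i}x_j$ both legitimate and inequality-preserving.
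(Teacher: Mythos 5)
Your proof is correct and follows essentially the same route as the paper's: bound ${\rm Re}(\vv(x))$ by splitting the sum over $\mathcal{P}_{(x,\vv)}(r)$ and its complement, use $(1-r)|x_j|$ off that set and $\|x\|\leq 1$ to absorb the tail, and solve for the mass on $\mathcal{P}_{(x,\vv)}(r)$. The only cosmetic difference is that you pass from $\vv_j x_j$ to $e^{\arg(\vv_j)i}x_j$ at the end rather than at the start, which changes nothing of substance.
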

\begin{proof}
By assumption, we have that
\begin{align*}
1-\delta &\leq{\rm Re}(\vv(x))=\sum_{j\in\mathbb{N}}{\rm Re}(\vv_j\, x_j)=\sum_{j\in\mathbb{N}}|\vv_j|\,{\rm Re}\big({e^{\arg(\vv_j)i}\, x_j}\big)\\
         &\leq \sum_{\mathcal{P}_{(x,\vv)}(r)}{\rm Re}\big({e^{\arg(\vv_j)i}\, x_j}\big)+(1-r)\sum_{\mathbb{N}\setminus\mathcal{P}_{(x,\vv)}(r)}|x_j|\\
         &\leq r\sum_{\mathcal{P}_{(x,\vv)}(r)}\left|{\rm Re}\big({e^{\arg(\vv_j)i}\, x_j}\big)\right|+(1-r),
\end{align*}
which implies that
\[
{\Big\|}{\rm Re}\big({e^{\arg(\vv)i}\,x}\big)\,\mathbbm{1}_{\mathcal{P}_{(x,\vv)}(r)}{\Big\|}=\sum_{j\in\mathcal{P}_{(x,\vv)}(r)}\left|{\rm Re}\Big({e^{\arg(\vv_j)i}\,x_j}\Big)\right|\geq 1-(\delta/r),
\]
as we wanted to show.
\end{proof}

Observe that the previous lemma implies, in particular, that
\begin{equation*}\label{eq:keylemma}
{\big\|}x\cdot\mathbbm{1}_{\mathcal{P}_{(x,\vv)}(r)}{\big\|}\geq 1-(\delta/r).
\end{equation*}

We present next the two constructive versions of the Bishop-Phelps-{Bollob\'as} theorem.

\subsection{First constructive version}

\begin{theo}\label{BPB-l1}
Given $(x,\,\vv)\in B_{\ell_1}\times B_{\ell_\infty}$ and $0<\ee<1$ such that $\vv(x)\geq1-\frac{\ee^3}{4}$. Then, there exists
$(x_0,\vv_0)\in\Pi(\ell_1)$ such that $\norm{x-x_0}\leq\ee$, $\norm{\vv-\vv_0}\leq\ee$. Moreover, we can {take}
\begin{equation}\label{eq:vector-v1}
x_0:=\big\|x\cdot\mathbbm{1}_{\mathcal{P}_{(x,\vv)}(\ee^2/2)}\big\|^{-1}\cdot x\cdot\mathbbm{1}_{\mathcal{P}_{(x,\vv)}(\ee^2/2)}.
\end{equation}
\end{theo}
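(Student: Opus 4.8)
The plan is to take $x_0$ exactly as in~\eqref{eq:vector-v1} and to build the companion functional $\vv_0$ by leaving $\vv$ untouched off the set $P:=\mathcal{P}_{(x,\vv)}(\ee^2/2)$ while correcting its phases on $P$ so that the pair lands in $\Pi(\ell_1)$. Concretely, set $\delta:=\ee^3/4$ and $r:=\ee^2/2$; since $0<\ee<1$ one has $0<\delta<r<1$, so Lemma~\ref{key-lemma} applies and, writing $\lambda:=\big\|x\cdot\mathbbm{1}_{P}\big\|$, yields the estimate $\lambda\geq 1-\delta/r=1-\ee/2$. In particular $\lambda>0$, so $x_0$ is well defined (recall $P\subset\supp(x)$ by~\eqref{eq:P}), $\norm{x_0}=1$, and $\supp(x_0)=P$. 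Then define $\vv_0\in\mathbb{C}^{\mathbb{N}}$ by $(\vv_0)_j:=e^{-\arg(x_j)i}$ for $j\in P$ and $(\vv_0)_j:=\vv_j$ for $j\notin P$.

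The first step is to check $(x_0,\vv_0)\in\Pi(\ell_1)$: each coordinate of $\vv_0$ has modulus $1$ on $P$ and modulus $\le 1$ off $P$, so $\vv_0\in S_{\ell_\infty}$; and for every $j$ one has $(\vv_0)_j(x_0)_j=|(x_0)_j|$ (trivially if $j\notin P$, and because $e^{-\arg(x_j)i}x_j=|x_j|$ if $j\in P$), so $\mathcal{N}_{(x_0,\vv_0)}=\mathbb{N}$ and Lemma~\ref{characteriz} gives the claim. The estimate on $\norm{x-x_0}$ comes from splitting over $P$ and its complement: on $\mathbb{N}\setminus P$ the difference $x-x_0$ agrees with $x$ and contributes $\norm{x}-\lambda\le 1-\lambda$, while on $P$ it equals $(1-\lambda^{-1})\,x\cdot\mathbbm{1}_P$ and contributes $(\lambda^{-1}-1)\lambda=1-\lambda$; hence $\norm{x-x_0}\le 2(1-\lambda)\le\ee$. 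For $\norm{\vv-\vv_0}$, note $\vv-\vv_0$ vanishes off $P$, so it suffices to bound $|\vv_j-e^{-\arg(x_j)i}|$ for $j\in P$: put $w:=\vv_j\,e^{\arg(x_j)i}$, so $|w|\le 1$, and since $x_j=|x_j|e^{\arg(x_j)i}$ with $|x_j|>0$ the membership $j\in P$ means ${\rm Re}(\vv_j x_j)\ge(1-r)|x_j|$, i.e. ${\rm Re}(w)\ge 1-r$; therefore $|\vv_j-e^{-\arg(x_j)i}|^2=|w-1|^2=|w|^2-2\,{\rm Re}(w)+1\le 2r=\ee^2$.

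I do not expect a genuine obstacle here; essentially all the content lies in matching the exponents. The cube in the hypothesis $\vv(x)\ge 1-\ee^3/4$ is present precisely so that $\delta/r=\ee/2$ absorbs the factor $2$ lost in the two-piece bound for $\norm{x-x_0}$, while the choice $r=\ee^2/2$ is dictated by the elementary inequality $|w-1|\le\sqrt{2r}$ used for the functional. The only point that deserves a line of care is that $\vv_0$ must simultaneously be a norm-one functional attaining its norm at $x_0$ \emph{and} stay $\ee$-close to $\vv$; this is possible because, by Lemma~\ref{characteriz}, $\vv_0$ is constrained only on $\supp(x_0)=P$ --where the phases of $\vv$ are already nearly optimal by the very definition of $\mathcal{P}_{(x,\vv)}$-- and is free to coincide with $\vv$ everywhere else.
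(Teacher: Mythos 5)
Your proof is correct and follows essentially the same route as the paper: the same $x_0$, the same corrected functional $\vv_0=\vv\cdot\mathbbm{1}_{\mathbb{N}\setminus P}+e^{-\arg(x)i}\cdot\mathbbm{1}_{P}$, the same two-piece computation giving $\norm{x-x_0}\leq 2(1-\lambda)\leq\ee$, and the same elementary bound $|w-1|^2\leq 2(1-{\rm Re}(w))$ for $\norm{\vv-\vv_0}$. Your invocation of Lemma~\ref{key-lemma} with $\delta=\ee^3/4$ and $r=\ee^2/2$ is in fact the internally consistent choice for the set $P=\mathcal{P}_{(x,\vv)}(\ee^2/2)$ (the paper's text cites $\delta=\ee^2/2$ and $r=\ee$, which would concern $\mathcal{P}_{(x,\vv)}(\ee)$ instead and appears to be a slip), and it yields the same bound $\lambda\geq 1-\ee/2$.
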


\begin{proof}
{Set $P:=\mathcal{P}_{(x,\vv)}(\ee^2/2)$ --see definition~\eqref{eq:P}}. Applying Lemma~\ref{key-lemma} {with $\delta=\ee^2/2$ and $r=\ee$} gives that
\begin{equation}\label{eq:M}
M{:=}\norm{x\cdot\mathbbm{1}_{P}}\geq 1-(\ee/2).
\end{equation}
Let us define
\begin{equation}\label{eq:definicionfi}
{\vv_0:=\vv\cdot\mathbbm{1}_{\mathbb{N}\setminus P}+e^{-\arg(x)i}\cdot\mathbbm{1}_{P}\in S_{\ell_\infty}}
\end{equation}
and
\begin{equation}\label{eq:definicionpunto}
{x_0:=M^{-1}x\cdot\mathbbm{1}_{P}\in S_{\ell_1}.}
\end{equation}

{On one hand}, we can compute
\begin{align*}
\norm{x-x_0}&\stackrel{{\eqref{eq:definicionpunto}}}{=}\norm{x-M^{-1}x\cdot\mathbbm{1}_{P}}=(M^{-1}-1)\norm{x\cdot\mathbbm{1}_{P}}+\norm{x\cdot\mathbbm{1}_{{\mathbb{N}\setminus P}}}\\
            &\stackrel{{\eqref{eq:M}}}{=}(1-M)+\norm{x\cdot\mathbbm{1}_{{\mathbb{N}\setminus P}}}{\stackrel{\|x\|\leq 1}{\leq}} 2-2M{\stackrel{\eqref{eq:M}}{\leq}}\ee,
\end{align*}
and, since the support of $x_0$ is included in $P$ {--this is a consequence of~\eqref{eq:definicionpunto}}, we deduce that
\[
\vv_0(x_0)=\sum_{j\in P}{(\vv_0)_{j}\,(x_0)_{j}}\stackrel{{\eqref{eq:definicionfi}}}{=}\sum_{j\in P}e^{-\arg(x_j)i}\,{(x_0)_{j}}\stackrel{{\eqref{eq:definicionpunto}}}{=}\sum_{j\in P}|{(x_0)_{j}}|{=\norm{x_0}}=1,
\]
which is equivalently expressed as $(x_0,\vv_0)\in\Pi(\ell_1)$.

On the other hand, {using that
\begin{equation}\label{eq:desigCompleja}
|z-1|\leq\sqrt{2(1-{\rm Re}(z))}\text{ for every }z\in\mathbb{C}\text{ such that }|z|\leq1,
\end{equation}
we deduce}
\begin{align*}
\norm{\vv-\vv_0}&\stackrel{{\eqref{eq:definicionfi}}}{=}\sup_{j\in P}\{|\vv_j-{(\vv_0)_{j}}|\}\stackrel{{\eqref{eq:definicionfi}}}{=}\sup_{j\in P}\big\{\big|\vv_j-e^{-\arg(x_j)i}\big|\big\}\\
                &=\sup_{j\in P}\big\{\big|e^{\arg(x_j)i}\,\vv_j-1\big|\big\}
                \stackrel{{\eqref{eq:desigCompleja}}}{\leq}\sup_{j\in P}\left\{\sqrt{2-2\,{\rm Re}\big(e^{\arg(x_j)i}\,\vv_j\big)}\right\}\\
                &\leq\sqrt{2-2(1-\ee^2/2)}=\ee,
\end{align*}
which finishes the proof.
\end{proof}

An immediate consequence of Theorem~\ref{BPB-l1} is the following version of the Bishop-Phelps-Bollob\'as theorem for $\ell_1(\mathbb{C})$.

\begin{cor}\label{coro-BPB-l1}
Let {$0<\ee<1$} and $(x,\,\vv)\in B_{\ell_1}\times B_{\ell_\infty}$ such that $|\vv(x)|\geq1-\frac{\ee^3}{4}$. Then, there exists
$(x_0,\vv_0)\in S_{\ell_1}\times S_{\ell_\infty}$ such that $\norm{x-x_0}\leq\ee$, $\norm{\vv-\vv_0}\leq\ee$ and $|\vv_0(x_0)|=1$.
\end{cor}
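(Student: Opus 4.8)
The plan is to reduce the statement with $|\vv(x)|\ge 1-\ee^3/4$ to the already-established Theorem~\ref{BPB-l1}, which handles the case $\vv(x)\ge 1-\ee^3/4$, by a rotation trick. The obstruction is merely that the hypothesis now only controls the modulus of the scalar $\vv(x)$, not its real part, so the first step is to rotate $\vv$ so that the pairing becomes a nonnegative real number close to $1$.

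Concretely, set $\theta:=\arg\big(\vv(x)\big)$ and define $\tvv:=e^{-\theta i}\vv$. Since $|\tvv_j|=|\vv_j|$ for every $j$, we have $\tvv\in B_{\ell_\infty}$, and $\tvv(x)=e^{-\theta i}\vv(x)=|\vv(x)|\ge 1-\ee^3/4$. Thus the pair $(x,\tvv)$ satisfies the hypotheses of Theorem~\ref{BPB-l1}, so there exists $(x_0,\tvv_0)\in\Pi(\ell_1)$ with $\norm{x-x_0}\le\ee$ and $\norm{\tvv-\tvv_0}\le\ee$.

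Now rotate back: put $\vv_0:=e^{\theta i}\tvv_0$. Then $\vv_0\in S_{\ell_\infty}$ because $|\vv_0|=|\tvv_0|$ pointwise, and $|\vv_0(x_0)|=|e^{\theta i}\tvv_0(x_0)|=|\tvv_0(x_0)|=1$ since $(x_0,\tvv_0)\in\Pi(\ell_1)$. Finally, multiplication by the unimodular constant $e^{\theta i}$ is an isometry on $\ell_\infty$, so
\[
\norm{\vv-\vv_0}=\norm{e^{\theta i}\tvv-e^{\theta i}\tvv_0}=\norm{\tvv-\tvv_0}\le\ee,
\]
while the estimate $\norm{x-x_0}\le\ee$ is untouched by the rotation. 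The pair $(x_0,\vv_0)\in S_{\ell_1}\times S_{\ell_\infty}$ with $|\vv_0(x_0)|=1$ is therefore the desired one, and the proof is complete.

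I expect no genuine obstacle here: the only point requiring a moment's care is to observe that rotating a functional preserves its $\ell_\infty$-norm and preserves distances in $\ell_\infty$, so all three conclusions transfer verbatim from Theorem~\ref{BPB-l1}; everything else is bookkeeping with the argument function.
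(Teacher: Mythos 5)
Your proof is correct and follows essentially the same rotation trick as the paper: the paper applies Theorem~\ref{BPB-l1} to the rotated vector $e^{-\arg(\vv(x))i}\,x$ and then rotates the output $z_0$ back, whereas you rotate the functional $\vv$ instead and rotate $\vv_0$ back. The two variants are interchangeable, and all the estimates transfer for the same reason (unimodular scaling is an isometry), so there is nothing to add.
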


\begin{proof}
Apply Theorem~\ref{BPB-l1} to the pair ${\left(e^{-\arg(\vv(x))i}\,x,\vv\right)}$ obtaining $(z_0,\vv_0)$ belonging to $\Pi(\ell_1)$ such that $\norm{e^{-\arg(\vv(x))i}\,x-z_0}\leq\ee$ and  $\norm{\vv-\vv_0}\leq\ee$. Therefore, if we set $x_0{:=}e^{\arg(\vv(x))i}\,z_0$, the pair $(x_0,\vv_0)$ satisfies the conclusions of the corollary.
\end{proof}

\subsection{Second constructive version}

Given a pair $(x,\vv)$ and {$0<\ee<1$}, Theorem~\ref{BPB-l1} ensures the existence of a pair $(x_0,\vv_0)$ --defined by~\eqref{eq:definicionpunto} and \eqref{eq:definicionfi}-- satisfying the conclusions of the Bishop-Phelps-Bollob\'as theorem. However, $\vv_0$ depends on $x$, in fact, on $\arg(x)$. In order to prove Theorem~\ref{BPBP-l1} we {will} need a functional $\vv_0$ depending {only on the given $\ee$ and $\vv$}.  So, we present the following result.

\begin{theo}\label{BPB-l1-improv}
Let $(x,\,\vv)\in B_{\ell_1}\times B_{\ell_\infty}$ and $\,0<\ee<1$ be such that $\vv(x)\geq1-\frac{\ee^3}{60}$. Then there exists
$(x_0,\vv_0)\in\Pi(\ell_1)$ such that $\norm{x-x_0}\leq\ee$, $\norm{\vv-\vv_0}\leq\ee$. Moreover, the functional $\vv_0$ can be defined as
\begin{equation}\label{eq:Funcional}
\vv_0=\vv\cdot\mathbbm{1}_{\mathbb{N}\setminus{\mathcal{A}_{\vv}(\ee^2/20)}}+e^{\arg(\vv)i}\cdot\mathbbm{1}_{\mathcal{A}_{\vv}(\ee^2/20)}.
\end{equation}
\end{theo}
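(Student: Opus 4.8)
The plan is to mimic the proof of Theorem~\ref{BPB-l1}, but replace the ``phase-adjusted'' functional $e^{-\arg(x)i}$ on the relevant coordinates by the cheaper choice $e^{\arg(\vv)i}$, which depends only on $\vv$. The price to pay is that $\vv_0$ is no longer exactly norm-one at those coordinates in the ``right'' direction for $x$, so the set on which we correct must be chosen to be $\mathcal{A}_\vv(\ee^2/20)$ (the coordinates where $|\vv_j|$ is already close to $1$), and we must work harder to produce the \emph{vector} $x_0$: it will not simply be a renormalized restriction of $x$, but a renormalized restriction with the phases on $\mathcal{A}_\vv(\ee^2/20)\cap\supp(x)$ rotated so as to be aligned with $\vv_0$.

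First I would fix the notation $A:=\mathcal{A}_\vv(\ee^2/20)$, $P:=\mathcal{P}_{(x,\vv)}(\ee^2/20)$, and record that $P\subset A$ (noted after~\eqref{eq:P}). Applying Lemma~\ref{key-lemma} with $\delta=\ee^3/60$ and $r=\ee^2/20$ gives $\big\|\mathrm{Re}(e^{\arg(\vv)i}x)\cdot\mathbbm{1}_{P}\big\|\ge 1-(\delta/r)=1-\ee/3$, and in particular $M:=\norm{x\cdot\mathbbm{1}_{P}}\ge 1-\ee/3$. Then I would define $\vv_0$ by~\eqref{eq:Funcional}; it lies in $S_{\ell_\infty}$ because on $A$ we put a unimodular number and off $A$ we keep $\vv_j$ with $|\vv_j|<1$. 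For the vector, set
\[
x_0:=M^{-1}\Big(e^{-\arg(\vv)i}\cdot|x|\cdot\mathbbm{1}_{P}+x\cdot\mathbbm{1}_{P\setminus A}\Big),
\]
but since $P\subset A$ this is simply $x_0:=M^{-1}\,e^{-\arg(\vv)i}\cdot|x|\cdot\mathbbm{1}_{P}$, i.e. we keep the moduli of $x$ on $P$, install the phase $-\arg(\vv_j)$ (the conjugate of $\vv_0$'s phase there), and renormalize. Then $\norm{x_0}=1$, the support of $x_0$ is in $P\subset A$, and $\vv_0(x_0)=M^{-1}\sum_{j\in P}e^{\arg(\vv_j)i}e^{-\arg(\vv_j)i}|x_j|=M^{-1}M=1$, so $(x_0,\vv_0)\in\Pi(\ell_1)$.

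The estimate $\norm{\vv-\vv_0}\le\ee$ is the easy half: on $\mathbb{N}\setminus A$ the two functionals agree, and on $j\in A$ we have $|\vv_j-e^{\arg(\vv_j)i}|=|\,|\vv_j|-1\,|\le \ee^2/20\le\ee$. The genuine obstacle is $\norm{x-x_0}\le\ee$, because — unlike in Theorem~\ref{BPB-l1} — on $P$ the vector $x_0$ differs from $x$ not only in modulus but in phase: $x_0$ has phase $-\arg(\vv_j)$ whereas $x_j$ has phase $\arg(x_j)$. I would split $\norm{x-x_0}\le \norm{x\cdot\mathbbm{1}_{\mathbb{N}\setminus P}}+\norm{(x-x_0)\cdot\mathbbm{1}_{P}}$ and bound each piece. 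The first piece is $1-M\le\ee/3$ via~\eqref{eq:M}-style reasoning. For the second, on each $j\in P$ write $|x_j-M^{-1}e^{-\arg(\vv_j)i}|x_j|\,|\le |x_j-e^{-\arg(\vv_j)i}|x_j|\,|+(M^{-1}-1)|x_j|$; the second term sums to $M^{-1}-1\le \ee/2$ (for $\ee$ small, using $M\ge 1-\ee/3$), while the first term is $|x_j|\cdot|e^{\arg(x_j)i}-e^{-\arg(\vv_j)i}|$. Here the point is that for $j\in P$ the complex number $e^{\arg(\vv_j)i}x_j=e^{i(\arg(\vv_j)+\arg(x_j))}|x_j|$ has real part $\ge(1-\ee^2/20)|x_j|$, hence $|e^{\arg(\vv_j)i}x_j-|x_j|\,|\le |x_j|\sqrt{2\cdot\ee^2/20}$ by~\eqref{eq:desigCompleja}, i.e. $|e^{i(\arg(\vv_j)+\arg(x_j))}-1|\le\sqrt{\ee^2/10}$, which is exactly $|e^{\arg(x_j)i}-e^{-\arg(\vv_j)i}|$. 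Summing over $P$ gives $\le \sqrt{\ee^2/10}\cdot\norm{x\cdot\mathbbm{1}_P}\le\ee/3$. Adding the three contributions $\ee/3+\ee/3+\ee/3$ (after checking the constants $60$ and $20$ are chosen to make each genuinely $\le\ee/3$ for all $0<\ee<1$) yields $\norm{x-x_0}\le\ee$, completing the proof. I expect the only real care needed is the bookkeeping of these constants so that the sum of the three error terms does not exceed $\ee$ uniformly in $\ee\in(0,1)$.
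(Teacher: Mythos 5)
Your proof is correct and follows essentially the same route as the paper's: the paper reduces to positive $\vv$ via the isometry $Sy=(e^{\arg(\vv_j)i}y_j)_j$ and takes $x_0$ to be the normalized restriction of $\mathrm{Re}\big(e^{\arg(\vv)i}x\big)$ to $\mathcal{P}_{(x,\vv)}(\ee^2/20)$ pulled back through $S^{-1}$, which differs from your choice (keeping the moduli $|x_j|$ and installing the phase $e^{-\arg(\vv_j)i}$) only in using real parts instead of moduli; both are controlled by the same $\mathcal{P}$-condition via~\eqref{eq:desigCompleja}, and your three error terms $\ee/3+\ee/3+\ee/3$ match the paper's bookkeeping. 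One small slip: the middle contribution $\sum_{j\in P}(M^{-1}-1)|x_j|$ equals $(M^{-1}-1)M=1-M\leq\ee/3$, not $M^{-1}-1\leq\ee/2$ as you wrote; taken literally your stated bounds would sum to $7\ee/6$, but the corrected value restores the $\ee/3+\ee/3+\ee/3$ tally, so the argument stands.
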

\begin{proof}
Let us consider the isometry $S\colon\ell_1\to\ell_1$ defined  by
	\begin{equation}\label{eq:IsometryS}
	\la e_j^*,Sy\ra=e^{\arg(\vv_j)i}\,y_j,\,\text{ for $y\in\ell_1$ and }\,j\in\mathbb{N}.
	\end{equation}
Set $\tx=Sx$ and $\tvv=\vv\circ S^{-1}$. Then, it is clear that the pair $(\tx,\tvv)$ is in $B_{\ell_1}\times B_{\ell_\infty}$, that $\tvv(\tx)\geq1-\frac{\ee^3}{60}$ and that $\tvv=(|\vv_j|)_{j\in\mathbb{N}}$ is positive. Denote by $A$ and $P$ respectively the sets $\mathcal{A}_{\tvv}(r)$ and $\mathcal{P}_{(\tx,\tvv)}(r)$ {--see definitions~\eqref{eq:A} and~\eqref{eq:P}}, where $r{:=}\frac{\ee^2}{20}$. Let us define
\begin{equation}\label{eq:funcionalAux}
\hvv{:=}\tvv\cdot\mathbbm{1}_{\mathbb{N}\setminus A}+\mathbbm{1}_{A}\in S_{\ell_\infty}
\end{equation}
and
\begin{equation}\label{eq:vectorAux}
\hx{:=}M^{-1}{\rm Re}(\tx)\cdot\mathbbm{1}_{P}\in S_{\ell_1},
\end{equation}
 where $M{:=}\norm{{\rm Re}(\tx)\cdot\mathbbm{1}_{P}}$. {Applying Lemma~\ref{key-lemma} with $\delta=\ee^3/60$ and $r$,} gives that $M\geq 1-\frac{\ee}{3}$. In particular, this means that $P$, and {thus} $A$, are non-empty.

We can compute that
\begin{align}
\norm{\tvv-\hvv}&\stackrel{{\eqref{eq:funcionalAux}}}{=}\sup_{j\in A}\{|\tvv_j-\hvv_{j}|\}\stackrel{{\eqref{eq:funcionalAux}}}{=}\sup_{j\in A}\{|\tvv_j-1|\}\nonumber\\
                &=\sup_{j\in A}\{(1-\tvv_j)\}\stackrel{{\eqref{eq:A}}}{\leq}  r\leq\ee,\label{eq:desigFuncional}
\end{align}
and, since {by~\eqref{eq:P} and \eqref{eq:vectorAux}} the support of $\hx$ is $P\subset A$ --which, in particular, implies that $\hx_{j}>0$ for $j\in P$, we deduce that
\begin{equation}\label{eq:inPI}
\hvv(\hx)=\sum_{j\in P}\hvv_{j}\hx_{j}\stackrel{{\eqref{eq:funcionalAux}}}{=}\sum_{j\in P}\hx_{j}=\sum_{j\in P}|\hx_{j}|=1,
\end{equation}
which is equivalently written as $(\hx,\hvv)\in\Pi(\ell_1)$.

In order to show that $\norm{\tx-\hx}\leq\ee$, let us observe first that
\begin{equation}\label{eq:aux1}
\norm{\tx\cdot\mathbbm{1}_{P}}=\sum_{j\in P}|\tx_j|\geq{\sum_{j\in P}|{\rm Re}(\tx_j)|=} M\geq 1-\frac{\ee}{3},
\end{equation}
from which
\begin{align}
\norm{\tx-\hx}&\stackrel{{\eqref{eq:vectorAux}}}{=}
\norm{\tx-M^{-1}{\rm Re}(\tx)\cdot\mathbbm{1}_{P}}=\norm{\tx\cdot\mathbbm{1}_{\mathbb{N}\setminus P}}+\norm{(\tx-M^{-1}{\rm Re}(\tx))\cdot\mathbbm{1}_{P}}\nonumber\\
            &\stackrel{{\eqref{eq:aux1}}}{\leq}\frac{\ee}{3}+\norm{(\tx-M^{-1}{\rm Re}(\tx))\cdot\mathbbm{1}_{P}}.\label{eq:aux2}
\end{align}
We need a bit more care to estimate the last term in~\eqref{eq:aux2}. From the very definition of $P$, we know that for every $j\in P$ it holds
\begin{equation}\label{eq:aux3}
|\tx_j|\leq(1-r)^{-1}\tvv_j\,{\rm Re}(\tx_j).
\end{equation}
Therefore,
\begin{align}
\norm{(\tx-{\rm Re}(\tx))\cdot\mathbbm{1}_P}&=\sum_{j\in P}|\tx_j-{\rm Re}(\tx_j)|=\sum_{j\in P}|{\rm Im}(\tx_j)|\nonumber\\
                                &=\sum_{j\in P}\sqrt{|\tx_j|^2-{\rm Re}(\tx_j)^2}\nonumber\\
                                &\stackrel{{\eqref{eq:aux3}}}{\leq}\sum_{j\in P}|{\rm Re}(\tx_j)|\sqrt{(1-r)^{-2}-1}\nonumber\\
                                &\leq{\norm{\tx}}\sqrt{(1-r)^{-2}-1}\stackrel{{r=\frac{\ee^2}{20}}}{\leq}\frac{\ee}{3},\label{eq:aux4}
\end{align}
which implies that
\begin{align}
\norm{(\tx-M^{-1}{\rm Re}(\tx))\cdot\mathbbm{1}_{P}}&\leq\norm{(\tx-{\rm Re}(\tx))\cdot\mathbbm{1}_P}+\norm{(1-M^{-1}){\rm Re}(\tx)\cdot\mathbbm{1}_{P}}\nonumber\\
&\stackrel{{\eqref{eq:aux4}}}{\leq}\frac{\ee}{3}+(M^{-1}-1)\norm{{\rm Re}(\tx)\cdot\mathbbm{1}_{P}}\nonumber\\
&=\frac{\ee}{3}+(1-M)\leq\frac{2\ee}{3}.\label{eq:aux5}
\end{align}

Putting together~\eqref{eq:aux2} and~\eqref{eq:aux5}, one obtains
\begin{equation}\label{eq:desigVector}
\norm{\tx-\hx}\leq \frac{\ee}{3}+\norm{(\tx-M^{-1}{\rm Re}(\tx))\cdot\mathbbm{1}_{P}}\leq\ee,
\end{equation}
which finishes the core of the proof.

Now, we define
\begin{equation}\label{eq:transfer}
x_0{:=}S^{-1}\hx\quad\text{and}\quad\vv_0=S^*(\hvv)=\hvv\circ S,
\end{equation}
which by~\eqref{eq:inPI} gives that $\vv_0(x_0)=\hvv(\hx)=1$. Since $S$ and $S^*$ are isometries, we deduce from~\eqref{eq:desigFuncional},~\eqref{eq:desigVector},~\eqref{eq:transfer} and the definition of $\tx$ and $\tvv$  that
\[
\norm{x-x_0}\leq\ee,\text{ }\norm{\vv-\vv_0}\leq\ee.
\]
Therefore, $(x_0,\vv_0)$ is the pair in $\Pi(\ell_1)$ we were looking for.

Bearing in mind~\eqref{eq:transfer}, one computes
\[
(\vv_{0})_j=\vv_0(e_j)\stackrel{{\eqref{eq:transfer}}}{=}\hvv(Se_j)\stackrel{{\eqref{eq:IsometryS}}}{=}\hvv\big(e^{\arg(\vv_j)i}\,e_j\big)=e^{\arg(\vv_j)i}\,\hvv_{j},
\]
which together with~\eqref{eq:funcionalAux} implies that $\vv_0=\vv\cdot\mathbbm{1}_{{\mathbb{N}\setminus A}}+e^{\arg(\vv)i}\cdot\mathbbm{1}_{A}$. {Finally, noting} that $A=\mathcal{A}_{\tvv}(r)=\mathcal{A}_{\vv}(r)$, the validity of~\eqref{eq:Funcional} has been shown.
\end{proof}

{\begin{rem}\label{Rem:BPB}
Observe that the function $\vv_0$ provided by Theorem~\ref{BPB-l1-improv} and defined by~\eqref{eq:Funcional}  only depends on $\ee$ and $\vv$ itself as well as satisfies $\pi_1(\vv)\subset\pi_1(\vv_0)$.
\end{rem}}

\section{BPB property for Numerical Radius in $\ell_1(\mathbb{C})$}\label{sec:BPBp-nuProperty-l1}

As a consequence of Theorems~\ref{BPB-l1} and~\ref{BPB-l1-improv} we show that $\ell_1$ has the Bishop-Phelps-Bollob\'as property for numerical radius.

\begin{theo}\label{BPBP-l1}
Let $T\in S_{\mathfrak{L}(\ell_1)}$, $0<\ee<1$ and $(x,\vv)\in\Pi(\ell_1)$  such that $\vv(Tx)\geq1-{(\ee/9)^{9/2}}$. Then there exist $T_0\in S_{\mathfrak{L}(\ell_1)}$ and $(x_0,\vv_0)\in\Pi(\ell_1)$ such {that}
\begin{equation}\label{eq:TheoThesis}
\norm{T-T_0}\leq\ee,\,\, \norm{x-x_0}\leq\ee,\,\,
\norm{\vv-\vv_0}\leq\ee\,\,\text{and}\,\,\vv_0(T_0x_0)=1.
\end{equation}
\end{theo}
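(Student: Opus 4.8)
Since $\ell_1$ has numerical index $1$ we have $\norm{T}=\nu(T)=1$, so it suffices to produce $T_0\in\mathfrak{L}(\ell_1)$ with $\norm{T_0}=1$ and $\vv_0(T_0x_0)=1$; recall also that $\norm{S}=\sup_n\norm{Se_n}$ for $S\in\mathfrak{L}(\ell_1)$. Fix a parameter $\ee_1$ (a small fixed multiple of $\ee$, tuned at the end), and let $\vv_0$ be the functional attached to $\vv$ and $\ee_1$ by formula~\eqref{eq:Funcional}, namely $\vv_0=\vv\cdot\mathbbm{1}_{\mathbb{N}\setminus A}+e^{\arg(\vv)i}\cdot\mathbbm{1}_{A}$ with $A:=\mathcal{A}_\vv(\ee_1^2/20)$. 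Then $\norm{\vv-\vv_0}\le\ee_1$, $|(\vv_0)_j|=1$ for $j\in A$, and, by Remark~\ref{Rem:BPB}, $\pi_1(\vv)\subseteq\pi_1(\vv_0)$; in particular $x\in\pi_1(\vv_0)$, so Lemma~\ref{characteriz} gives ${\rm supp}(x)\subseteq A$ and $x_j=|x_j|\,\overline{(\vv_0)_j}$ for every $j$. Put $\delta:=(\ee/9)^{9/2}$. The idea is to trim $x$ to a vector $x_0$ which stays in $\pi_1(\vv_0)$ and then repair, one by one, the columns $Te_n$ for $n$ in the support of $x_0$, using Theorem~\ref{BPB-l1-improv} with the \emph{same} $\ee_1$ — hence the same $\vv_0$ — so that a single pair $(x_0,\vv_0)$ ends up witnessing numerical radius attainment.

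\textbf{Trimming $x$.} Set $\psi:=\vv\circ T\in B_{\ell_\infty}$, so $\psi(x)=\vv(Tx)\ge 1-\delta$. For a parameter $\rho_0>\delta$, the remark following Lemma~\ref{key-lemma} (applied to $(x,\psi)$ with $\delta$ and $r=\rho_0$) gives that $P:=\mathcal{P}_{(x,\psi)}(\rho_0)$ satisfies $M:=\norm{x\cdot\mathbbm{1}_P}\ge 1-\delta/\rho_0$. Define $x_0:=M^{-1}x\cdot\mathbbm{1}_P\in S_{\ell_1}$. Since $P\subseteq{\rm supp}(x)$ and $x\in\pi_1(\vv_0)$, one has $\mathcal{N}_{(x_0,\vv_0)}=\mathbb{N}$, so by Lemma~\ref{characteriz} $x_0\in\pi_1(\vv_0)$, i.e.\ $(x_0,\vv_0)\in\Pi(\ell_1)$; and $\norm{x-x_0}=2(1-M)\le 2\delta/\rho_0$.

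\textbf{Repairing the columns.} Put $T_0e_n:=Te_n$ for $n\notin P$. Fix $n\in P$. Because $n\in\mathcal{P}_{(x,\psi)}(\rho_0)$ and $x_n=|x_n|\,\overline{(\vv_0)_n}$, the vector $u_n:=\overline{(\vv_0)_n}\,Te_n\in B_{\ell_1}$ satisfies ${\rm Re}\,\vv(u_n)={\rm Re}\big(\overline{(\vv_0)_n}\,\psi_n\big)\ge 1-\rho_0$, hence $\tilde u_n:=e^{-\arg(\vv(u_n))i}\,u_n$ has $\vv(\tilde u_n)=|\vv(u_n)|\ge 1-\rho_0$ (a real number), $\norm{\tilde u_n}\le 1$, and $\norm{\tilde u_n-u_n}\le\sqrt{2\rho_0}$. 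Provided $\rho_0\le\ee_1^3/60$, Theorem~\ref{BPB-l1-improv} applies to $(\tilde u_n,\vv)$ with parameter $\ee_1$ and — crucially, since by~\eqref{eq:Funcional} the produced functional depends only on $\ee_1$ and $\vv$ — yields $\tilde w_n$ with $(\tilde w_n,\vv_0)\in\Pi(\ell_1)$ for the very same $\vv_0$, and $\norm{\tilde u_n-\tilde w_n}\le\ee_1$. Set $T_0e_n:=(\vv_0)_n\,\tilde w_n$. As $|(\vv_0)_n|=1$ for $n\in{\rm supp}(x)\supseteq P$, we get $\norm{T_0e_n}=1$ for $n\in P$ and $\norm{T_0e_n}\le1$ otherwise, so $\norm{T_0}=1$; moreover $\norm{T_0e_n-Te_n}=\norm{\tilde w_n-u_n}\le\ee_1+\sqrt{2\rho_0}$, whence $\norm{T-T_0}\le\ee_1+\sqrt{2\rho_0}$. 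Finally $x_0$ is supported on $P$ with $x_{0,n}=M^{-1}|x_n|\,\overline{(\vv_0)_n}$ and $\vv_0(\tilde w_n)=1$, so
\[
\vv_0(T_0x_0)=\sum_{n\in P}x_{0,n}\,(\vv_0)_n\,\vv_0(\tilde w_n)=\sum_{n\in P}M^{-1}|x_n|=1 .
\]

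\textbf{Choice of parameters and the main point.} Taking $\ee_1:=\ee/5$ and $\rho_0:=\ee_1^3/60$ one checks that $\delta<\rho_0$ (so Lemma~\ref{key-lemma} applies) and $\rho_0=\ee_1^3/60$ (so Theorem~\ref{BPB-l1-improv} applies to each $(\tilde u_n,\vv)$), while $\norm{\vv-\vv_0}\le\ee_1<\ee$, $\norm{T-T_0}\le\ee_1+\sqrt{2\rho_0}<\ee$, and $\norm{x-x_0}\le 2\delta/\rho_0=120\,\delta/\ee_1^3\le\ee$; the last inequality is exactly where the hypothesis is consumed, since $(\ee/9)^{9/2}=\ee^{9/2}/9^{9/2}\le\ee^4/15000=\ee\,\ee_1^3/120$ for $0<\ee<1$ (this is what fixes the exponent $9/2$, and the only reason it does not suffice to assume merely $\vv(Tx)\ge 1-c\ee^4$ is the modest slack in the constants). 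I expect the one substantive point to be the one stressed in the third step: Theorem~\ref{BPB-l1-improv}, unlike the $x$-dependent Theorem~\ref{BPB-l1}, repairs an \emph{arbitrary} almost-norming vector while producing a functional that depends only on $\ee_1$ and $\vv$, so that applying it with a single $\ee_1$ to every column $Te_n$ ($n\in P$) yields one common $\vv_0$, and the inclusion $\pi_1(\vv)\subseteq\pi_1(\vv_0)$ simultaneously places $x_0$ in $\pi_1(\vv_0)$ — this is precisely what lets a single pair $(x_0,\vv_0)$ work for the corrected operator. The remaining verifications (membership in $\Pi(\ell_1)$, the norm identities, and the parameter bookkeeping above) are routine.
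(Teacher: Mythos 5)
Your proposal is correct and follows essentially the same strategy as the paper's proof: trim $x$ to $x_0$ supported on the set $P$ where ${\rm Re}(T^*\vv)_j x_j$ nearly equals $|x_j|$ (via Lemma~\ref{key-lemma}/Theorem~\ref{BPB-l1}), then repair each column $Te_n$, $n\in P$, with Theorem~\ref{BPB-l1-improv}, exploiting exactly the key point you highlight — that the functional $\vv_0$ of~\eqref{eq:Funcional} depends only on $\vv$ and the parameter, so one $\vv_0$ serves all columns and, by $\pi_1(\vv)\subset\pi_1(\vv_0)$, also norms $x_0$. The only differences are cosmetic: the paper first reduces by an isometry to $x$ positive with $\vv\equiv 1$ on ${\rm supp}(x)$ (where you carry the phases $\overline{(\vv_0)_n}$ explicitly), and the constants are tuned slightly differently ($\mu=\sqrt{\ee^3/240}$ and $\ee/2$ versus your $\rho_0$ and $\ee_1=\ee/5$).
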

\begin{proof}
First of all, fix $\mu:=\sqrt{\ee^3/240}$. Using a suitable isometry, we can assume that $x$ is positive. In particular, by Lemma~\ref{characteriz} and the definition of $\mathcal{N}_{x,\vv}$ in~\eqref{eq:Nset}, we can assume that $\vv_j=1$ for $j\in{\rm supp}(x)$. Since $\mu^3/4\geq(\ee/9)^{9/2}$, Theorem~\ref{BPB-l1} can be applied to the pair $(x,\,T^*\vv)\in B_{\ell_1}\times B_{\ell_\infty}$ and $\mu$ instead of $\ee$ giving $x_0\in\pi_1(\vv)$ such that $\norm{x-x_0}\leq\mu\leq\ee$. Moreover,  by~\eqref{eq:vector-v1} we know that
\begin{equation}\label{eq:TeoVector}
x_0=\norm{x\cdot\mathbbm{1}_{P}}^{-1}\cdot x\cdot\mathbbm{1}_{P},
\end{equation}
where the non-empty set $P$ is defined by
\begin{equation}\label{eq:TheoP}
{P:=\mathcal{P}_{(x,\,T^*\vv)}(\mu^2/2)=\{j\in{\rm supp}(x)\colon {\rm Re}(T^*\vv(e_j))\geq 1- \mu^2/2\}.}
\end{equation}
In particular, $x_0$ is positive.

{Since $\mu^2/2=\frac{(\ee/2)^3}{60}$, for} each $j\in P$ we {can} apply Theorem~\ref{BPB-l1-improv} to {the pair $(e^{-\arg(\vv(Te_j))i}\, Te_j,\vv)$ and $\ee/2$}  to find  $(z_j,\vv_0)\in\Pi(\ell_1)$ such that
\[
\norm{Te_j-a_jz_j}\leq\ee/2,\quad \norm{\vv-\vv_0}\leq\ee/2
\]
and $\Pi_1(\vv)\subset\Pi_1(\vv_0)$ {--see Remark~\ref{Rem:BPB}}, where $a_j=e^{\arg(\vv(Te_j))i}$.
 {Observe that $\vv_0$ can be chosen independently on $j\in P$ and by~\eqref{eq:Funcional} explicitly written as}
\begin{equation}\label{eq:Funcional-bis}
\vv_0=\vv\cdot\mathbbm{1}_{\mathbb{N}\setminus{\mathcal{A}_{\vv}(\ee^2/80)}}+e^{\arg(\vv)i}\cdot\mathbbm{1}_{\mathcal{A}_{\vv}(\ee^2/80)}.
\end{equation}

Let us define $T_0$ as the unique operator in $\mathfrak{L}(\ell_1)$ such that $T_0e_i=Te_i$ for $i\notin P$ and $T_0e_j=z_j$ for $j\in P$. {Equivalently,}
\begin{equation}\label{eq:operator}
{T_0x=\mathbbm{1}_{\mathbb{N}\setminus P}\cdot Tx+\sum_{j\in P}e_j^*(x)z_j, \text{ for } x\in\ell_1.}
\end{equation}
It is clear {from~\eqref{eq:operator}} that
\[
\norm{T_0}=\sup_{n\in\mathbb{N}}\{\norm{T_0e_n}\}=\max\bigg\{\sup_{j\notin P}\{\norm{Te_j}\},\,\sup_{j\in P}\{\norm{z_j}\}\bigg\}= 1.
\]
{Given $j\in P$, the identity~\eqref{eq:TheoP} ensures that ${\rm Re}(\vv(Te_j))\geq1-\mu^2/2$. Using again the general fact~\eqref{eq:desigCompleja}, we deduce that $|a_j-1|\leq\mu\leq\ee/2$.}

{Therefore,}
\begin{align*}
\norm{T-T_0}&=\sup_{n\in\mathbb{N}}\{\norm{Te_n-T_0e_n}\}=\sup_{j\in P}\{\norm{Te_{j}-z_j}\}\\
          &\leq\sup_{j\in P}\{\norm{Te_{j}-a_jz_j}\}+\sup_{j\in P}\{\norm{a_jz_j-z_j}\}\\
          &\leq\frac{\ee}{2}+\sup_{j\in P}\{|a_j-1|\}\leq\ee.
\end{align*}

Since $x_0\in\pi_1(\vv)$ and $\pi_1(\vv)\subset\pi_1(\vv_0)$, we deduce that $(x_0,\vv_0)$ belongs to $\Pi(\ell_1)$. It remains to
show that $\vv_0(T_0x_0)=1$ {to prove the validity of~\eqref{eq:TheoThesis}. But, since $x_0$ is positive}, we obtain that
\begin{align*}
\vv_0(T_0x_0)&\stackrel{{\eqref{eq:operator}}}{=}\sum_{j\in P} (x_0)_{j}\vv_0(z_j)+\sum_{{j\notin P}}(x_{0})_{j}\vv_0(Te_j)\\
           &\stackrel{{\eqref{eq:TeoVector}}}{=}\sum_{j\in P}(x_{0})_{j}=\sum_{j\in P} |(x_{0})_{j}|=\norm{x_0}=1, \end{align*}
and the proof is over.
\end{proof}

\begin{rem}
We cannot replace the condition $(x,\vv)\in\Pi(\ell_1)$ in Theorem~\ref{BPBP-l1} by the more general $(x,\vv)\in B_{\ell_1}\times B_{\ell_\infty}$. Indeed, let us consider the operator $T\colon \ell_1\to\ell_1$ defined by $Te_j=e_j$ for $j\geq 2$ and $Te_1=e_2$. Take $(e_1,e_2^*)\in B_{\ell_1}\times B_{\ell_\infty}$, $T_0\in\mathfrak{L}(\ell_1)$, and $(x,\vv)\in B_{\ell_1}\times B_{\ell_\infty}$ such that  $\|T-T_0\|\leq\ee$, $\|e_1-x\|\leq\ee$, and $\|e_2^*-\vv\|\leq\ee$. Then
\[
|\vv(x)|\leq|\vv(x)-e_2^*(x)|+|e_2^*(x)-e_2^*(e_1)|+|e_2^*(e_1)|\leq2\ee,
\]
which implies that $(x,\vv)$ cannot be in $\Pi(\ell_1)$.
\end{rem}

\begin{cor}\label{NR-l1}
The Banach space $\ell_1$ has the Bishop-Phelps-Bollob\'as property for numerical radius.
\end{cor}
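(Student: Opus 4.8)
The plan is to read the corollary directly off Theorem~\ref{BPBP-l1} by unwinding Definition~\ref{BPBp}; the only two points requiring attention are that $\ell_1$ has numerical index $1$ --so that, as observed after Definition~\ref{BPBp}, the seminorm $\nu(\cdot)$ may be replaced by $\norm{\cdot}$-- and a harmless rotation by a unimodular scalar, needed to pass from the hypothesis $|\vv(Tx)|\geq 1-\delta$ of the definition to the hypothesis $\vv(Tx)\geq 1-\delta$ (without modulus) of Theorem~\ref{BPBP-l1}.

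Concretely, given $0<\ee<1$ we set $\delta:=(\ee/9)^{9/2}>0$, which is exactly the gap appearing in Theorem~\ref{BPBP-l1}. Let $T\in\mathfrak{L}(\ell_1)$ with $\nu(T)=1$ and $(x,\vv)\in\Pi(\ell_1)$ with $|\vv(Tx)|\geq 1-\delta$ be given. Since $n(\ell_1)=1$ (see~\cite{Kad-Mart-Paya}), we have $\norm{T}=\nu(T)=1$, so $T\in S_{\mathfrak{L}(\ell_1)}$. Put $\lambda:=e^{-\arg(\vv(Tx))i}$, a unimodular scalar with $\lambda\,\vv(Tx)=|\vv(Tx)|\geq 1-\delta$. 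Then $\lambda T\in S_{\mathfrak{L}(\ell_1)}$ and $\vv\big((\lambda T)x\big)\geq 1-\delta$, so Theorem~\ref{BPBP-l1} applies to $\lambda T$, to $\ee$, and to the pair $(x,\vv)$, producing $T_0\in S_{\mathfrak{L}(\ell_1)}$ and $(x_0,\vv_0)\in\Pi(\ell_1)$ with $\norm{\lambda T-T_0}\leq\ee$, $\norm{x-x_0}\leq\ee$, $\norm{\vv-\vv_0}\leq\ee$ and $\vv_0(T_0x_0)=1$.

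We then set $S:=\lambda^{-1}T_0$, $y:=x_0$ and $y^*:=\vv_0$. Using $|\lambda|=1$ together with $n(\ell_1)=1$ once more, one checks immediately that $\nu(S)=\norm{S}=\norm{T_0}=1$, that $\nu(T-S)=\norm{T-S}=\norm{\lambda^{-1}(\lambda T-T_0)}=\norm{\lambda T-T_0}\leq\ee$, and that $|y^*(Sy)|=|\lambda^{-1}\vv_0(T_0x_0)|=1$; the remaining estimates $\norm{x-y}\leq\ee$ and $\norm{\vv-y^*}\leq\ee$ are part of the output of Theorem~\ref{BPBP-l1}. This is precisely the conclusion required by Definition~\ref{BPBp}. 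There is essentially no obstacle here: all the analytic work has already been carried out in Theorem~\ref{BPBP-l1} and, before it, in the two constructive Bishop-Phelps-Bollob\'as theorems; the only subtleties are to apply the rotation $\lambda$ to the operator $T$ rather than to $x$ or $\vv$, so that the pair $(x,\vv)\in\Pi(\ell_1)$ is handed unchanged to Theorem~\ref{BPBP-l1}, and to use the equality $\nu(\cdot)=\norm{\cdot}$ coming from the numerical index $1$ to ensure that the definition and the theorem refer to the same quantities.
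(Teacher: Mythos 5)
Your proposal is correct and follows essentially the same route as the paper: fix $\delta=(\ee/9)^{9/2}$, use $n(\ell_1)=1$ to identify $\nu(\cdot)$ with $\norm{\cdot}$, rotate $T$ by $e^{-\arg(\vv(Tx))i}$ to reduce to the real inequality $\vv(Tx)\geq1-\delta$, and then invoke Theorem~\ref{BPBP-l1}. Your write-up is in fact slightly more careful than the paper's, since you explicitly undo the rotation by setting $S=\lambda^{-1}T_0$ and verify that all the estimates survive, a step the paper leaves implicit.
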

\begin{proof}
Let us consider $T\in\mathfrak{L}(\ell_1)$ {with $\nu(T)=1$ and $0<\ee<1$.} Let us take a pair $(x,\vv)\in\Pi(\ell_1)$ such that $|\vv(Tx)|\geq1-({\ee/9})^{\frac{9}{2}}$. In fact, we can assume that $\vv(Tx)\geq1-({\ee/9})^{\frac{9}{2}}$; otherwise, we proceed with $\tT=e^{-\arg(\vv(Tx))i}\,T$. Then Theorem~\ref{BPBP-l1} gives the existence of an operator $T_0{\in S_{\mathfrak{L}(\ell_1)}}$ and a pair $(x_0,\vv_0){\in\Pi(\ell_1)}$ that satisfy {conditions in~\eqref{eq:TheoThesis}, which are precisely the requirements~\eqref{eq:defBPBpnu} in} Definition~\ref{BPBp}.
\end{proof}
{\begin{cor}[{\cite{Card-c0}}]
The set ${\rm NRA}(\ell_1)$ is dense in $\mathfrak{L}(\ell_1)$.
\end{cor}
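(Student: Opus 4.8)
The plan is to deduce this statement directly from Corollary~\ref{NR-l1} (equivalently, from Theorem~\ref{BPBP-l1}) by a routine normalization-plus-approximation argument, since the Bishop--Phelps--Bollob\'as property for numerical radius trivially entails the density of ${\rm NRA}(\ell_1)$. Fix $T\in\mathfrak{L}(\ell_1)$ and $0<\ee<1$; the goal is to produce $S\in{\rm NRA}(\ell_1)$ with $\norm{T-S}\leq\ee$. If $T=0$ there is nothing to do, as $0$ trivially attains its numerical radius, so assume $T\neq 0$. Since $n(\ell_1)=1$ we have $\nu(T)=\norm{T}$; replacing $T$ by $T/\norm{T}$ and $\ee$ by $\ee/\norm{T}$, we may assume $\nu(T)=1$ and rescale back at the very end.

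First I would use that $\nu(T)$ is a supremum to pick a pair $(x,\vv)\in\Pi(\ell_1)$ with $|\vv(Tx)|\geq1-(\ee/9)^{9/2}$. Replacing $T$ by $e^{-\arg(\vv(Tx))i}\,T$ --which alters neither $\nu(T)$ nor the set of numerical radius attaining operators, and only amounts to a unimodular rotation of the final $S$-- we may further assume $\vv(Tx)\geq1-(\ee/9)^{9/2}$. Now Theorem~\ref{BPBP-l1} applies and yields $T_0\in S_{\mathfrak{L}(\ell_1)}$ together with $(x_0,\vv_0)\in\Pi(\ell_1)$ such that $\norm{T-T_0}\leq\ee$ and $\vv_0(T_0x_0)=1$.

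It then remains to observe that $T_0$ lies in ${\rm NRA}(\ell_1)$: indeed $\nu(T_0)=\norm{T_0}=1$ by $n(\ell_1)=1$, and $|\vv_0(T_0x_0)|=1=\nu(T_0)$ with $(x_0,\vv_0)\in\Pi(\ell_1)$, so the supremum defining $\nu(T_0)$ is attained at $(x_0,\vv_0)$. Undoing the unimodular rotation keeps $T_0\in{\rm NRA}(\ell_1)$ and within $\ee$ of the rotated $T$, hence of $T$; undoing the initial rescaling delivers the desired $S\in{\rm NRA}(\ell_1)$ with $\norm{T-S}\leq\ee\norm{T}$, and since $\ee$ was arbitrary this proves density. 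I do not expect any genuine obstacle here; the only two points deserving a word are the reduction to $\nu(T)=1$ via $n(\ell_1)=1$ and the unimodular rotation, both of which are immediate.
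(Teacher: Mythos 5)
Your argument is correct and is exactly the route the paper intends: the corollary is stated without proof as an immediate consequence of Corollary~\ref{NR-l1}, whose proof you essentially reproduce (normalization via $n(\ell_1)=1$, unimodular rotation, selection of a near-attaining pair, and an application of Theorem~\ref{BPBP-l1}), adding only the trivial observation that $\vv_0(T_0x_0)=1=\nu(T_0)$ forces $T_0\in{\rm NRA}(\ell_1)$. The only point worth a footnote is that after rescaling one should keep the parameter fed to Theorem~\ref{BPBP-l1} strictly below $1$ (e.g.\ use $\min\{\ee,1/2\}$), which your final estimate $\norm{T-S}\leq\ee\norm{T}$ with $\ee$ arbitrary already accommodates.
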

}
\section{BPB property for Numerical Radius in $c_0(\mathbb{C})$}\label{sec:BPBp-nuProperty-c0}

Theorem~\ref{BPBP-l1} allows us to show that $c_0$ has the Bishop-Phelps-Bollob\'as property for numerical radius as well. Indeed, we rely on the fact that our constructions in $\ell_1$ can be dualized.

\begin{theo} \label{theo:BPBPc0}
Let $T\in S_{\mathfrak{L}(c_0)}$, $0<\ee<1$ and $(x,\vv)\in \Pi(c_0)$ such that $|\vv(Tx)|\geq1-(\ee/9)^{9/2}$.
Then there exist $S\in S_{\mathfrak{L}(c_0)}$ and $(x_{0},\vv_0)\in \Pi(c_0)$, such {that}
\begin{equation*}
\norm{T-S}\leq \ee,\,\,\norm{x-x_0}\leq \ee,\,\, \norm{\vv-\vv_0}\leq \ee\,\,\text{and}\,\,\vv_0(Sx_{0})=1{.}
\end{equation*}
\end{theo}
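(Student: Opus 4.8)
The plan is to reduce the statement to its $\ell_1$ counterpart, Theorem~\ref{BPBP-l1}, through the duality $c_0^*=\ell_1$, and then to check that the operator that theorem produces is a \emph{dual} operator, hence induced by an operator on $c_0$. First, exactly as in the proof of Corollary~\ref{NR-l1}, after replacing $T$ by $e^{-\arg(\vv(Tx))i}\,T$ (and composing the operator obtained at the end with the inverse rotation, which at worst weakens the final equality $\vv_0(Sx_0)=1$ to $|\vv_0(Sx_0)|=1$, the form required by Definition~\ref{BPBp}), we may assume $\vv(Tx)\ge 1-(\ee/9)^{9/2}$. Consider the adjoint $T^*\in\mathfrak L(\ell_1)$, so that $\norm{T^*}=\norm T=1$, and regard $x\in c_0$ as an element of $\ell_\infty=(\ell_1)^*$. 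Then $\norm{x}_{\ell_\infty}=1$, $\langle T^*\vv,x\rangle=\langle\vv,Tx\rangle=\vv(Tx)\ge 1-(\ee/9)^{9/2}$, and, since $\vv(x)=1$, the pair $(\vv,x)$ belongs to $\Pi(\ell_1)$. Applying Theorem~\ref{BPBP-l1} to $T^*$, to $(\vv,x)$ and to $\ee$ yields $R\in S_{\mathfrak L(\ell_1)}$ and a pair $(\vv_0,\eta_0)\in\Pi(\ell_1)$ with $\norm{T^*-R}\le\ee$, $\norm{\vv-\vv_0}_{\ell_1}\le\ee$, $\norm{x-\eta_0}_{\ell_\infty}\le\ee$ and $\eta_0(R\vv_0)=1$.

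The key observation is that ${\rm supp}(\vv)$ is \emph{finite}. Indeed, $(x,\vv)\in\Pi(c_0)$ gives $1=\sum_j\vv_jx_j\le\sum_j|\vv_j|\,|x_j|\le\norm{x}_{\ell_\infty}\sum_j|\vv_j|=1$, so $|x_j|=1$ for every $j\in{\rm supp}(\vv)$ (the $c_0$ analogue of Lemma~\ref{characteriz}), and since $x\in c_0$ this set must be finite. Two consequences follow. First, by the explicit form of the approximating functional inside the proof of Theorem~\ref{BPBP-l1} --formula~\eqref{eq:Funcional-bis}, in which the coordinate playing the role of ``$\vv$'' is here our $x$-- we have $\eta_0=x\cdot\mathbbm 1_{\mathbb N\setminus\mathcal A_x(\ee^2/80)}+e^{\arg(x)i}\cdot\mathbbm 1_{\mathcal A_x(\ee^2/80)}$, and $\mathcal A_x(\ee^2/80)=\{j\in\mathbb N:|x_j|\ge 1-\ee^2/80\}$ is finite because $x\in c_0$; hence $\eta_0$ differs from $x\in c_0$ in only finitely many coordinates, so $\eta_0\in c_0$ with $\norm{\eta_0}_{c_0}=1$. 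Second, by the construction~\eqref{eq:operator} the operator $R$ acts as $R\psi=T^*\big(\mathbbm 1_{\mathbb N\setminus P}\cdot\psi\big)+\sum_{j\in P}\psi_j\,z_j$ for suitable $z_j\in\ell_1$, where $P\subset{\rm supp}(\vv)$ by~\eqref{eq:TheoP} and is therefore finite. Both $T^*$ and the coordinate projection $\psi\mapsto\mathbbm 1_{\mathbb N\setminus P}\cdot\psi$ are weak$^*$-to-weak$^*$ continuous on $\ell_1$, and so is $\psi\mapsto\sum_{j\in P}\psi_j z_j$, because $P$ is finite and each coordinate functional $\psi\mapsto\psi_j$ is weak$^*$-continuous (it is evaluation at $e_j\in c_0$). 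Consequently $R$ is weak$^*$-to-weak$^*$ continuous, so $R=S^*$ for a unique $S\in\mathfrak L(c_0)$, and $\norm S=\norm{S^*}=\norm R=1$.

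It then remains to collect the pieces. Set $x_0:=\eta_0\in S_{c_0}$ and keep $\vv_0\in S_{\ell_1}=S_{c_0^*}$. Then $\norm{x-x_0}_{c_0}=\norm{x-\eta_0}_{\ell_\infty}\le\ee$, $\norm{\vv-\vv_0}_{c_0^*}\le\ee$, and $\norm{T-S}=\norm{(T-S)^*}=\norm{T^*-R}\le\ee$. Moreover $(x_0,\vv_0)\in\Pi(c_0)$, since $\vv_0(x_0)=\langle\vv_0,\eta_0\rangle=1$ because $(\vv_0,\eta_0)\in\Pi(\ell_1)$, and
\[
\vv_0(Sx_0)=\langle\vv_0,S\eta_0\rangle=\langle S^*\vv_0,\eta_0\rangle=\langle R\vv_0,\eta_0\rangle=\eta_0(R\vv_0)=1,
\]
so that $\nu(S)\ge|\vv_0(Sx_0)|=1=\norm S\ge\nu(S)$, i.e. $\nu(S)=1$; if the initial rotation was performed, undoing it leaves $|\vv_0(Sx_0)|=1$, which is what Definition~\ref{BPBp} requires.

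The step I expect to be the real obstacle is precisely the verification that $R$ is a dual operator. This is where the geometry of $\Pi(c_0)$ is essential: it forces ${\rm supp}(\vv)$, and hence the index set $P$ governing the perturbation in~\eqref{eq:operator}, to be finite, so that $R$ is only a finite-rank perturbation of $T^*$ precomposed with a coordinate projection; without finiteness of $P$ the perturbation $\psi\mapsto\sum_{j\in P}\psi_j z_j$ need not be weak$^*$-continuous and the transfer back to $c_0$ would break down.
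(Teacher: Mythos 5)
Your proof is correct and follows essentially the same route as the paper: dualize to $\ell_1$, apply Theorem~\ref{BPBP-l1} to $T^*$ and the pair $(\vv,x)$, and then use the explicit constructions \eqref{eq:TheoP}--\eqref{eq:operator} to verify that the resulting operator is weak$^*$-to-weak$^*$ continuous, hence of the form $S^*$ for some $S\in\mathfrak{L}(c_0)$. The only cosmetic difference is that you deduce finiteness of $P$ from finiteness of ${\rm supp}(\vv)$ (via the $c_0$-analogue of Lemma~\ref{characteriz}), whereas the paper deduces it from $T^{**}x\in c_0$; both arguments are valid, and your explicit remark that undoing the initial rotation only yields $|\vv_0(Sx_0)|=1$ is a point the paper glosses over.
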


\begin{proof} {Throughout this proof we identify the elements in $c_0$ with their image in $\ell_\infty$ through the natural embedding $c_0\to\ell_\infty$.}
The adjoint operator of $T$,  $T^*\colon \ell_1\rightarrow \ell_1$ satisfies
\[
	|x(T^*\vv)|=|T^*(\vv)(x)|=|\vv(Tx)|{\geq}1-(\ee/9)^{9/2}.
\]

Without loss of generality, we can assume that $x(T^*\vv)\geq1-(\ee/9)^{9/2}$. Otherwise, employing techniques from {the proof of} Corollary~\ref{NR-l1}, {define the operator} $\tT=e^{-\arg(x(T^*\vv))i}\,T^*$
and proceed with the proof for $x(\tT\vv)=|x(T^*\vv)|$.

By Theorem~\ref{BPBP-l1}, there exists $T_0\in \mathfrak{L}(\ell_1)$, $\norm{T_0}=1$ and $(\vv_0,x_0)\in \Pi(\ell_1)$ such that
\[
    \norm{T^*-T_0}\leq \ee,\,\,\,\, \norm{\vv-\vv_0}\leq\ee,\,\,\,\,
    \norm{x-x_0}\leq\ee
\]
and {$x_0(T_0\vv_0)=1$.}

We assert that $(x_0, \vv_0)$  is the pair we are looking for. To show this, we will reexamine the proof
of {Theorem~\ref{BPBP-l1}} to establish how $x_0$, $\vv_0$ and $T_0$ are defined.
{Indeed, from \eqref{eq:TheoP}, \eqref{eq:TeoVector}, \eqref{eq:Funcional-bis} and \eqref{eq:operator} we have respectively
\begin{align}
P&=\mathcal{P}_{(\vv, T^{**}x)}(\ee^3/480),\nonumber\\
\vv_0&=\norm{\vv \cdot \mathbbm{1}_P}^{-1}\cdot \vv \cdot \mathbbm{1}_P,\nonumber\\
x_0&=x\cdot \mathbbm{1}_{{\mathbb{N}\setminus A_{x}(\ee^2/80)}}+e^{{\arg}(x)i}\cdot\mathbbm{1}_{A_{x}(\ee^2/80)},\label{eq:x0}\\
T_0x&=\mathbbm{1}_{\mathbb{N}\setminus P}\cdot Tx+\sum_{j\in P}e_j^*(x)z_j, \text{ for } x\in\ell_1,\nonumber
\end{align}
where $\{z_j\}_{j\in P}\subset\pi_1(\vv_0)$.}

Note that $\mathcal{A}_{x}(\ee^2/80)=\{j\in N\colon  |x_j|\geq 1-\ee^2/80\}$ and that $x\in c_0$. Thus, $A_{x}(\ee^2/80)$ is finite which, by~\eqref{eq:x0}, implies that $x_0\in c_0$.

{We shall show that $T_0$ is an adjoint operator and thus that there exists $S\in \mathfrak{L}(c_0)$ such that $S^*=T_0$. It will be enough to show that $T_0^*|_{c_0}\subset c_0$.} Set  $t_{ij}=\langle e_i,T(e_j)\rangle$ for $i,j\in\mathbb{N}$. Fix $i\in\mathbb{N}$, then for $j\in\mathbb{N}$
\[
    \langle e_j,T_0^*(e_i)\rangle = \left\{
     \begin{array}{lr}
       t_{ji} & \text{if } j \notin P,\\
         (z_j)_{i} & \text{if } j \in P.
     \end{array}
   \right.
\]

Since $x\in c_0$, $T^{**}x$ belongs to $c_0$, which implies that {$P$ is finite}. Accordingly,  only finitely many terms of the form $\langle e_j,T_0^*(e_i)\rangle$ differ from the corresponding $t_{ji}$. On the other hand, since $T$ belongs to $\mathfrak{L}(c_0)$, it holds that $\lim_j|t_{ji}|=0$. Therefore, we deduce that $|\langle e_j,T_0^*(e_i)\rangle|\rightarrow 0$ when $j\rightarrow \infty$. This implies that $T_0^*e_i\in c_0$ and, since $i\in\mathbb{N}$ is arbitrarily chosen, we deduce that $T_0^*|_{c_0}\subset c_0$.

Hence we obtain the  operator
$S=T_0^{*}|_{c_0}\in\mathfrak{L}(c_0)$ and the pair $(x_0,\vv_0)\in \Pi(c_0)$ satisfying:
\[
    \vv_0(Sx_0)=S^*\vv_0(x_0)=x_0(S^*\vv_0)=x_0(T_0\vv_0)=1,
\]
and
\[
    \norm{S-T}=\norm{(S-T)^*}=\norm{S^*-T^*}=\norm{T_0-T^*}\leq \ee,
\]
which finishes the proof.
\end{proof}

{Theorem~\ref{theo:BPBPc0} implies the following two corollaries.
\begin{cor}\label{NR-c0}
The Banach space $c_0$ has the Bishop-Phelps-Bollob\'as property for numerical radius.
\end{cor}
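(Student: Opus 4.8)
The plan is to obtain Corollary~\ref{NR-c0} as an immediate consequence of Theorem~\ref{theo:BPBPc0}, in exact parallel with the way Corollary~\ref{NR-l1} was deduced from Theorem~\ref{BPBP-l1}. The only structural remark needed at the outset is that $c_0$ has numerical index $1$, so that $\nu(S)=\norm{S}$ for every $S\in\mathfrak{L}(c_0)$; as observed right after Definition~\ref{BPBp}, this lets us read the requirement $\nu(T)=1$ as $T\in S_{\mathfrak{L}(c_0)}$ and bound $\nu(T-S)\leq\norm{T-S}$.

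Concretely, fix $0<\ee<1$ and set $\delta:=(\ee/9)^{9/2}>0$. Suppose $T\in\mathfrak{L}(c_0)$ satisfies $\nu(T)=1$ and $(x,\vv)\in\Pi(c_0)$ satisfies $|\vv(Tx)|\geq1-\delta$. Since $\nu(T)=\norm{T}$ we have $T\in S_{\mathfrak{L}(c_0)}$, and because the hypothesis of Theorem~\ref{theo:BPBPc0} is stated directly in terms of $|\vv(Tx)|$ --its proof already absorbs a unimodular rotation, just as in Corollary~\ref{NR-l1}-- I would apply that theorem verbatim to $T$, $\ee$ and $(x,\vv)$. This yields $S\in S_{\mathfrak{L}(c_0)}$ and $(x_0,\vv_0)\in\Pi(c_0)$ with $\norm{T-S}\leq\ee$, $\norm{x-x_0}\leq\ee$, $\norm{\vv-\vv_0}\leq\ee$ and $\vv_0(Sx_0)=1$.

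It then only remains to check these data against Definition~\ref{BPBp}: with $(y,y^*):=(x_0,\vv_0)$ one has $\nu(S)=\norm{S}=1$, $\nu(T-S)\leq\norm{T-S}\leq\ee$, and the three norm estimates together with $|\vv_0(Sx_0)|=1$ are exactly the conditions in~\eqref{eq:defBPBpnu}. Hence $c_0$ enjoys the BPBp-$\nu$ with the admissible choice $\delta=(\ee/9)^{9/2}$. I do not foresee any genuine difficulty in this argument: all the work has already been carried out in Theorems~\ref{BPBP-l1} and~\ref{theo:BPBPc0} --notably the dualization step showing that the perturbed operator $T_0$ is an adjoint-- so the corollary reduces to unwinding the definitions once numerical index $1$ has been invoked.
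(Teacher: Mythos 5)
Your proof is correct and takes exactly the route the paper intends: the paper states Corollary~\ref{NR-c0} as an immediate consequence of Theorem~\ref{theo:BPBPc0} with no written argument, and your text merely supplies the routine verification (invoking $n(c_0)=1$ so that $\nu(\cdot)=\norm{\cdot}$ on $\mathfrak{L}(c_0)$, choosing $\delta=(\ee/9)^{9/2}$, and matching the theorem's conclusions against Definition~\ref{BPBp}) that the authors leave implicit. Your observation that no unimodular rotation is needed, because Theorem~\ref{theo:BPBPc0} is already stated with $|\vv(Tx)|$, is also accurate.
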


\begin{cor}[{\cite{Card-c0}}]
The set ${\rm NRA}(c_0)$ is dense in $\mathfrak{L}(c_0)$.
\end{cor}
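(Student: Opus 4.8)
The plan is to deduce this density statement directly from Theorem~\ref{theo:BPBPc0}. Let $T\in\mathfrak{L}(c_0)$ with $T\neq0$ and $\ee\in(0,1)$ be given; we must produce $S\in{\rm NRA}(c_0)$ with $\norm{T-S}$ small. First I would normalize: since $n(c_0)=1$ the numerical radius coincides with the operator norm, so $\nu(T)\neq0$ and, dividing by $\nu(T)$, we may assume $\nu(T)=\norm{T}=1$ (it then suffices to find $S\in{\rm NRA}(c_0)$ with $\norm{T-S}\leq\ee$, since homogeneity returns us to the original $T$ with distance $\ee\,\nu(T)$).

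Next, by the very definition of $\nu(T)$ as a supremum over $\Pi(c_0)$, I would pick a pair $(x,\vv)\in\Pi(c_0)$ with $|\vv(Tx)|\geq 1-(\ee/9)^{9/2}$. Exactly as in the proof of Corollary~\ref{NR-l1}, by replacing $T$ with $e^{-\arg(\vv(Tx))i}\,T$ if necessary -- and remembering to undo this unimodular rotation at the end, which affects neither the norm distance to $T$ nor membership in ${\rm NRA}(c_0)$ -- we may assume $\vv(Tx)\geq 1-(\ee/9)^{9/2}$. Theorem~\ref{theo:BPBPc0} now applies verbatim and yields $S\in S_{\mathfrak{L}(c_0)}$ together with a pair $(x_0,\vv_0)\in\Pi(c_0)$ such that $\norm{T-S}\leq\ee$ (and $\norm{x-x_0}\leq\ee$, $\norm{\vv-\vv_0}\leq\ee$, though these are not needed here) and $\vv_0(Sx_0)=1$.

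It remains to observe that $S\in{\rm NRA}(c_0)$: from $(x_0,\vv_0)\in\Pi(c_0)$ and $|\vv_0(Sx_0)|=1$ we get $\nu(S)\geq 1$, while $\nu(S)\leq\norm{S}=1$; hence $\nu(S)=1=|\vv_0(Sx_0)|$ is attained at the pair $(x_0,\vv_0)$. Thus for every $\ee\in(0,1)$ the operator $S$ lies in ${\rm NRA}(c_0)$ within distance $\ee$ of $T$, and since $T$ was arbitrary this proves that ${\rm NRA}(c_0)$ is dense in $\mathfrak{L}(c_0)$. There is essentially no obstacle at this stage: all the substance is already contained in Theorem~\ref{theo:BPBPc0} (and behind it Theorem~\ref{BPBP-l1} together with the two constructive Bishop--Phelps--Bollob\'as theorems), and the only points deserving a word of care are the reduction to $\nu(T)=1$ via $n(c_0)=1$ and the bookkeeping of the complex phase.
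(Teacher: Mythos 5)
Your proof is correct and follows exactly the route the paper intends: the corollary is stated as an immediate consequence of Theorem~\ref{theo:BPBPc0} (mirroring how Corollary~\ref{NR-l1} is deduced from Theorem~\ref{BPBP-l1}), and your normalization via $n(c_0)=1$, choice of a near-attaining pair, and verification that $\nu(S)=1=|\vv_0(Sx_0)|$ is precisely that deduction. The only superfluous step is the phase rotation: Theorem~\ref{theo:BPBPc0} is already stated with the hypothesis $|\vv(Tx)|\geq1-(\ee/9)^{9/2}$, so no rotation is needed there.
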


}

\section{Generalizations and remarks}\label{sec:BPBp-nuProperty-generalizations}
All the results that have been presented in sections~\ref{sec:ConstructiveVersions},~\ref{sec:BPBp-nuProperty-l1} and~\ref{sec:BPBp-nuProperty-c0} were stated and proved for the Banach spaces $\ell_1(\mathbb{C})$ or $c_0(\mathbb{C})$. However, a glance at their proofs suffices to convince oneself of their validity for $\ell_1(\mathbb{R})$ and $c_0(\mathbb{R})$ --shorter proofs and better estimates can be obtained in this case. More generally, given a non-empty set $\Gamma$ and $\mathbb{K}\in\{\mathbb{R},\mathbb{C}\}$, these results are, after suitable adjustments, still valid for $\ell_1(\Gamma,\mathbb{K})$ and $c_0(\Gamma,\mathbb{K})$. The spaces $\ell_1(\Gamma,\mathbb{K})$ and $c_0(\Gamma,\mathbb{K})$ are, respectively, the $\ell_1$-sum and the $c_0$-sum of $\Gamma$ copies of the field $\mathbb{K}$. Note that in particular $\ell_1(\mathbb{N},\mathbb{K})=\ell_1(\mathbb{K})$.

{The Banach space $c_0(\Gamma,\mathbb{K})$ is a predual of $\ell_1(\Gamma,\mathbb{K})$. Observe that both $c_0(\Gamma,\mathbb{K})$ and $\ell_1(\Gamma,\mathbb{K})$ have numerical index $1$. Previous considerations imply that both of them also have the BPB property for numerical radius. The $\weak^*$ topology of  $\ell_1(\Gamma,\mathbb{K})$ stands here for the topology induced on $\ell_1(\Gamma,\mathbb{K})$ by pointwise convergence on elements of $c_0(\Gamma,\mathbb{K})$.}

On the other hand, the proof of Theorem~\ref{theo:BPBPc0} {shows} that in Theorem~\ref{BPBP-l1} we {proved more than was stated}. Indeed, putting together Theorem~\ref{BPBP-l1}, the {ideas} on duality in the proof of Theorem~\ref{theo:BPBPc0} and considerations above, one easily proves the following theorem.

\begin{theo}\label{BPBP-l1-strong}
Let $T\in S_{\mathfrak{L}({\ell_1(\Gamma,\mathbb{K})})}$, $0<\ee<1$ and $(x,\vv)\in\Pi({\ell_1(\Gamma,\mathbb{K})})$ such that $\vv(Tx)\geq1-(\ee/9)^{9/2}$. Then
there exist $T_0\in S_{\mathfrak{L}({\ell_1(\Gamma,\mathbb{K})})}$ and $(x_0,\vv_0)\in\Pi({\ell_1(\Gamma,\mathbb{K})})$ such {that}
\[
\norm{T-T_0}\leq\ee,\,\, \norm{x-x_0}\leq\ee,\,\, \norm{\vv-\vv_0}\leq\ee\,\,\text{ and }\,\,\vv_0(T_0x_0)=1.
\]

Moreover, if $T$ is $\weak^*$-$\weak^*$-continuous and $\vv$ is $\weak^*$-continuous, then $T_0$ and $\vv_0$ will be $\weak^*$-$\weak^*$-continuous and $\weak^*$-continuous, respectively.
\end{theo}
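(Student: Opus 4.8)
The plan is to deduce Theorem~\ref{BPBP-l1-strong} by revisiting the constructions that already appear in the proofs of Theorems~\ref{BPBP-l1} and~\ref{theo:BPBPc0}, verifying that every object produced there is explicit enough to respect the relevant topological constraints, and handling the index set $\Gamma$ and the scalar field $\mathbb{K}$ uniformly. The first, non-topological part of the statement is immediate: as remarked in Section~\ref{sec:BPBp-nuProperty-generalizations}, the proof of Theorem~\ref{BPBP-l1} uses nothing specific about $\mathbb{N}$ or $\mathbb{C}$ --- the sets $\mathcal{N}_{(x,\vv)}$, $\mathcal{A}_\vv(r)$, $\mathcal{P}_{(x,\vv)}(r)$ and the characteristic-function constructions make sense over any $\Gamma$ and for $\mathbb{K}\in\{\mathbb{R},\mathbb{C}\}$ --- so Theorems~\ref{BPB-l1}, \ref{BPB-l1-improv} and hence~\ref{BPBP-l1} carry over verbatim to $\ell_1(\Gamma,\mathbb{K})$. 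Thus for the first paragraph of the conclusion one simply invokes the $\Gamma$-version of Theorem~\ref{BPBP-l1}.

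The substantive point is the $\weak^*$-continuity. Here I would argue exactly as in the proof of Theorem~\ref{theo:BPBPc0}: recall that a functional $\vv\in\ell_1(\Gamma,\mathbb{K})$ is $\weak^*$-continuous iff it lies in (the canonical image of) $c_0(\Gamma,\mathbb{K})$, i.e. has support of finite complement-free character --- more precisely $\vv\in c_0(\Gamma,\mathbb{K})$ means $\{\gamma:|\vv_\gamma|\geq t\}$ is finite for every $t>0$; and an operator $T\in\mathfrak{L}(\ell_1(\Gamma,\mathbb{K}))$ is $\weak^*$-$\weak^*$-continuous iff it is the adjoint of an operator on $c_0(\Gamma,\mathbb{K})$, equivalently iff $T^*|_{c_0}\subset c_0$. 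Now inspect the output of Theorem~\ref{BPBP-l1} as made explicit in the proof of Theorem~\ref{theo:BPBPc0} (formulas~\eqref{eq:TheoP}, \eqref{eq:TeoVector}, \eqref{eq:Funcional-bis}, \eqref{eq:operator}): the set $P=\mathcal{P}_{(\vv,T^{**}x)}(\ee^3/480)$ is finite whenever $T^{**}x\in c_0$, which holds when $T$ is $\weak^*$-$\weak^*$-continuous (so $T^{**}x=Tx\in c_0$) --- wait, one must be careful: in Theorem~\ref{BPBP-l1} the relevant set is $P=\mathcal{P}_{(x,T^*\vv)}(\mu^2/2)\subset\mathrm{supp}(x)$, and $T^*\vv\in\ell_\infty$; finiteness of $P$ follows once $T^*\vv\in c_0$, which holds because $\vv\in c_0$ and $T^*$ maps $c_0$ into $c_0$ (this is precisely $\weak^*$-$\weak^*$-continuity of $T$). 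With $P$ finite, the functional $\vv_0$ of~\eqref{eq:Funcional-bis} differs from $\vv$ only on the finite set $\mathcal{A}_\vv(\ee^2/80)$ (finite since $\vv\in c_0$), hence $\vv_0\in c_0(\Gamma,\mathbb{K})$, i.e.\ $\vv_0$ is $\weak^*$-continuous; and $T_0$, which equals $T$ off the finite-rank perturbation $\sum_{j\in P}e_j^*(\cdot)z_j$, has $T_0^*e_i$ differing from $T^*e_i$ in only finitely many coordinates, each original coordinate tending to $0$ along $\Gamma$ since $T^*e_i\in c_0$ --- exactly the computation with the $t_{ij}$ in the proof of Theorem~\ref{theo:BPBPc0} --- so $T_0^*|_{c_0}\subset c_0$ and $T_0$ is $\weak^*$-$\weak^*$-continuous.

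So the proof I would write is short: first state that the $\Gamma,\mathbb{K}$-versions of Theorems~\ref{BPB-l1}, \ref{BPB-l1-improv}, \ref{BPBP-l1} hold by the same proofs; then apply Theorem~\ref{BPBP-l1} to get $T_0$ and $(x_0,\vv_0)$ with the four norm/attainment conclusions; then, assuming $T$ is $\weak^*$-$\weak^*$-continuous and $\vv$ is $\weak^*$-continuous (equivalently $\vv\in c_0(\Gamma,\mathbb{K})$ and $T^*(c_0)\subset c_0$), observe that the explicit formulas for $\vv_0$ and $T_0$ involve only finite modifications of $\vv$ and $T$ driven by the finite sets $\mathcal{A}_\vv(\ee^2/80)$ and $P$, conclude $\vv_0\in c_0(\Gamma,\mathbb{K})$ and $T_0^*|_{c_0}\subset c_0$, and invoke the standard characterizations of $\weak^*$-continuity to finish. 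The one place requiring genuine (though routine) care --- the main obstacle --- is the bookkeeping in the last step: one must track precisely which data the sets $P$ and $\mathcal{A}_\vv(\ee^2/80)$ depend on, so as to be sure their finiteness follows solely from $\vv\in c_0$ and $T^*(c_0)\subset c_0$ and not from some hidden use of $\weak^*$-continuity of the \emph{approximating} objects; once one checks that the dependence is only on the given $T$ and $\vv$, everything closes. Writing the $z_j\in\pi_1(\vv_0)$ explicitly (they come from Theorem~\ref{BPB-l1-improv}) also deserves a line, but they enter $T_0$ only through a finite sum and so cause no trouble.
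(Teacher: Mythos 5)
Your proposal is correct and follows essentially the same route the paper intends: Theorem~\ref{BPBP-l1-strong} is obtained by combining the $(\Gamma,\mathbb{K})$-version of Theorem~\ref{BPBP-l1} with the duality and finiteness analysis of the explicit formulas \eqref{eq:TheoP}, \eqref{eq:TeoVector}, \eqref{eq:Funcional-bis} and \eqref{eq:operator} already carried out in the proof of Theorem~\ref{theo:BPBPc0}. Your key point --- that $P\subset\mathcal{A}_{T^*\vv}(\mu^2/2)$ and $\mathcal{A}_{\vv}(\ee^2/80)$ are finite precisely because $\vv\in c_0(\Gamma,\mathbb{K})$ and $T^*$ preserves $c_0(\Gamma,\mathbb{K})$, so that $\vv_0$ and $T_0$ are only finite modifications of $\vv$ and $T$ and hence inherit the required $\weak^*$-continuity --- is exactly the argument the paper sketches.
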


Below are two consequences of Theorem~\ref{BPBP-l1-strong}.
\begin{theo}
The Banach space {$\ell_1(\Gamma,\mathbb{K})$} has the BPB property for numerical radius.
\end{theo}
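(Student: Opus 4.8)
The plan is to derive this final theorem as an immediate corollary of Theorem~\ref{BPBP-l1-strong}, in exactly the same way that Corollary~\ref{NR-l1} was derived from Theorem~\ref{BPBP-l1}. Since $\ell_1(\Gamma,\mathbb{K})$ has numerical index $1$, the seminorm $\nu(\cdot)$ coincides with $\norm{\cdot}$ on $\mathfrak{L}(\ell_1(\Gamma,\mathbb{K}))$, so Definition~\ref{BPBp} reduces to a purely norm-theoretic statement. Thus I would start by fixing $0<\ee<1$ and an operator $T\in\mathfrak{L}(\ell_1(\Gamma,\mathbb{K}))$ with $\nu(T)=\norm{T}=1$, and I would set $\delta:=(\ee/9)^{9/2}$, which is the precise threshold appearing in Theorem~\ref{BPBP-l1-strong}.

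The next step is to take a pair $(x,\vv)\in\Pi(\ell_1(\Gamma,\mathbb{K}))$ witnessing the hypothesis $|\vv(Tx)|\geq1-\delta$. As in the proof of Corollary~\ref{NR-l1}, a rotation argument removes the modulus: if $|\vv(Tx)|=\vv(Tx)$ we proceed directly, and otherwise we replace $T$ by $\tT=e^{-\arg(\vv(Tx))i}\,T$, which has the same norm and numerical radius, and for which $\vv(\tT x)=|\vv(Tx)|\geq1-\delta$. (When $\mathbb{K}=\mathbb{R}$ this is just multiplication by $\pm1$.) Then I would apply Theorem~\ref{BPBP-l1-strong} to obtain $T_0\in S_{\mathfrak{L}(\ell_1(\Gamma,\mathbb{K}))}$ and $(x_0,\vv_0)\in\Pi(\ell_1(\Gamma,\mathbb{K}))$ with $\norm{T-T_0}\leq\ee$, $\norm{x-x_0}\leq\ee$, $\norm{\vv-\vv_0}\leq\ee$ and $\vv_0(T_0x_0)=1$. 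Since $\nu(T-T_0)\leq\norm{T-T_0}\leq\ee$ and $\nu(T_0)=\norm{T_0}=1$, the operator $S:=T_0$ and the pair $(y,y^*):=(x_0,\vv_0)$ satisfy all the requirements of~\eqref{eq:defBPBpnu}: indeed $|y^*(Sy)|=|\vv_0(T_0x_0)|=1$.

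There is essentially no obstacle here; the only point requiring a word of care is making sure the rotation used to reduce to the case $\vv(Tx)\geq 1-\delta$ does not disturb anything, but $e^{-\arg(\vv(Tx))i}T$ has the same norm, the same numerical radius, and the reconstructed operator $e^{\arg(\vv(Tx))i}T_0$ is still norm-one and still $\ee$-close to the original $T$, exactly as in Corollary~\ref{NR-l1}. So the proof reads as follows.

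\begin{proof}
The space $\ell_1(\Gamma,\mathbb{K})$ has numerical index $1$, so on $\mathfrak{L}(\ell_1(\Gamma,\mathbb{K}))$ the numerical radius $\nu(\cdot)$ agrees with $\norm{\cdot}$ and, as observed after Definition~\ref{BPBp}, it suffices to verify the defining property with $\norm{\cdot}$ in place of $\nu(\cdot)$. Fix $0<\ee<1$ and put $\delta:=(\ee/9)^{9/2}$. Let $T\in\mathfrak{L}(\ell_1(\Gamma,\mathbb{K}))$ with $\norm{T}=1$ and let $(x,\vv)\in\Pi(\ell_1(\Gamma,\mathbb{K}))$ satisfy $|\vv(Tx)|\geq1-\delta$. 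We may assume $\vv(Tx)\geq1-\delta$; otherwise we replace $T$ by $e^{-\arg(\vv(Tx))i}\,T$, which has norm $1$ and satisfies $\vv\big(e^{-\arg(\vv(Tx))i}\,Tx\big)=|\vv(Tx)|\geq1-\delta$, and recover at the end the desired operator by multiplying back by $e^{\arg(\vv(Tx))i}$. Theorem~\ref{BPBP-l1-strong} then provides $T_0\in S_{\mathfrak{L}(\ell_1(\Gamma,\mathbb{K}))}$ and $(x_0,\vv_0)\in\Pi(\ell_1(\Gamma,\mathbb{K}))$ with $\norm{T-T_0}\leq\ee$, $\norm{x-x_0}\leq\ee$, $\norm{\vv-\vv_0}\leq\ee$ and $\vv_0(T_0x_0)=1$. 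Taking $S:=T_0$ and $(y,y^*):=(x_0,\vv_0)$, we have $\nu(T-S)\leq\norm{T-S}\leq\ee$, $\nu(S)=\norm{S}=1$, $\norm{x-y}\leq\ee$, $\norm{\vv-y^*}\leq\ee$ and $|y^*(Sy)|=1$, which are exactly the conditions in~\eqref{eq:defBPBpnu}. Hence $\ell_1(\Gamma,\mathbb{K})$ has the Bishop-Phelps-Bollob\'as property for numerical radius.
\end{proof}
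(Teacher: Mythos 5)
Your proof is correct and follows exactly the route the paper intends: the theorem is stated as an immediate consequence of Theorem~\ref{BPBP-l1-strong}, derived by the same rotation-and-apply argument used in Corollary~\ref{NR-l1}. The paper leaves this deduction implicit, and your write-up fills it in faithfully, including the observation that $|y^*(Sy)|=1$ (rather than $y^*(Sy)=1$) is all that Definition~\ref{BPBp} requires after undoing the rotation.
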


\begin{theo}\label{TeoNew}
The Banach space ${c_0(\Gamma,\mathbb{K})}$ has the BPB property for numerical radius.
\end{theo}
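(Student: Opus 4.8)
The plan is to deduce Theorem~\ref{TeoNew} from Theorem~\ref{BPBP-l1-strong} by exactly the duality argument already used in the proof of Theorem~\ref{theo:BPBPc0}, now carried out in the generality of an arbitrary index set $\Gamma$. First I would fix $T\in S_{\mathfrak{L}(c_0(\Gamma,\mathbb{K}))}$, $0<\ee<1$ and $(x,\vv)\in\Pi(c_0(\Gamma,\mathbb{K}))$ with $|\vv(Tx)|\geq 1-(\ee/9)^{9/2}$, and pass to the adjoint $T^*\in S_{\mathfrak{L}(\ell_1(\Gamma,\mathbb{K}))}$, noting $|x(T^*\vv)|=|\vv(Tx)|$ after identifying $x\in c_0(\Gamma,\mathbb{K})$ with its image in $\ell_1(\Gamma,\mathbb{K})^*=\ell_\infty(\Gamma,\mathbb{K})$ — more precisely, $x$ sits in the predual $c_0(\Gamma,\mathbb{K})\subset\ell_\infty(\Gamma,\mathbb{K})$, so $(\vv,x)$ is a valid pair in $\Pi(\ell_1(\Gamma,\mathbb{K}))$. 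After a rotation $\tT=e^{-\arg(x(T^*\vv))i}\,T^*$ as in Corollary~\ref{NR-l1} we may assume $x(T^*\vv)\geq 1-(\ee/9)^{9/2}$, and then apply Theorem~\ref{BPBP-l1-strong} to the pair $(\vv,x)$ and the operator $T^*$.

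This yields $T_0\in S_{\mathfrak{L}(\ell_1(\Gamma,\mathbb{K}))}$ and $(\vv_0,x_0)\in\Pi(\ell_1(\Gamma,\mathbb{K}))$ with $\norm{T^*-T_0}\leq\ee$, $\norm{\vv-\vv_0}\leq\ee$, $\norm{x-x_0}\leq\ee$ and $\vv_0(T_0 x_0)=1$. The crucial point — and the only place any real work is needed — is to check that $T_0$ is weak$^*$-weak$^*$-continuous and that $x_0$ is weak$^*$-continuous, so that $T_0=S^*$ for some $S\in\mathfrak{L}(c_0(\Gamma,\mathbb{K}))$ and $x_0\in c_0(\Gamma,\mathbb{K})$. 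But this is precisely the content of the ``Moreover'' clause of Theorem~\ref{BPBP-l1-strong}: since $T$ is bounded on $c_0(\Gamma,\mathbb{K})$, its adjoint $T^*$ is automatically weak$^*$-weak$^*$-continuous on $\ell_1(\Gamma,\mathbb{K})$, and $x$, lying in $c_0(\Gamma,\mathbb{K})$, is weak$^*$-continuous as a functional on $\ell_1(\Gamma,\mathbb{K})$. Hence the hypotheses of the ``Moreover'' clause are met and the conclusion hands us the required continuity of $T_0$ and $x_0$ for free.

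Finally I would set $S:=(T_0)_*\in\mathfrak{L}(c_0(\Gamma,\mathbb{K}))$, the unique operator with $S^*=T_0$, and verify the four desired estimates. Using that taking adjoints is an isometry, $\norm{S-T}=\norm{S^*-T^*}=\norm{T_0-T^*}\leq\ee$; the inequalities $\norm{x-x_0}\leq\ee$ and $\norm{\vv-\vv_0}\leq\ee$ are immediate once we reread $x_0\in c_0(\Gamma,\mathbb{K})$ and $\vv_0\in\ell_1(\Gamma,\mathbb{K})$ as the relevant vector and functional (with the roles of vector and functional swapped compared to the $\ell_1$ picture, exactly as in the proof of Theorem~\ref{theo:BPBPc0}); and
\[
\vv_0(Sx_0)=S^*\vv_0(x_0)=x_0(S^*\vv_0)=x_0(T_0\vv_0)=\vv_0(T_0x_0)=1,
\]
so $(x_0,\vv_0)\in\Pi(c_0(\Gamma,\mathbb{K}))$ and $|\vv_0(Sx_0)|=1$. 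Since $\Gamma$ is arbitrary, this establishes the BPBp-$\nu$ for $c_0(\Gamma,\mathbb{K})$. I do not anticipate a genuine obstacle: the argument is a transcription of the proof of Theorem~\ref{theo:BPBPc0}, the sole subtlety being to invoke the weak$^*$-continuity conclusions of Theorem~\ref{BPBP-l1-strong} rather than re-deriving, as was done in Theorem~\ref{theo:BPBPc0}, that the relevant index sets $P$ and $\mathcal{A}_x(\ee^2/80)$ are finite — an argument which in any case goes through verbatim for general $\Gamma$ because $x\in c_0(\Gamma,\mathbb{K})$ still forces those sets to be finite.
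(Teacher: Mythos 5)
Your proposal is correct and follows essentially the same route as the paper: pass to the adjoint $T^*$, apply Theorem~\ref{BPBP-l1-strong} to the swapped pair, and use the ``Moreover'' clause on $\weak^*$-continuity to identify $T_0$ with an adjoint $S^*$ and $x_0^{**}$ with an element of $c_0(\Gamma,\mathbb{K})$. The only cosmetic difference is that you spell out the rotation reducing $|\vv(Tx)|$ to $\vv(Tx)$ and the final chain of equalities, which the paper leaves implicit.
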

\begin{proof}
Fix {$0<\varepsilon<1$}, $\delta\leq (\ee/9)^{9/2}$, $T\in S_{\mathfrak{L}(c_0(\Gamma,\mathbb{K}))}$ {and $(x,x^*)\in \Pi(c_0(\Gamma,\mathbb{K}))$} such that $x^*(Tx){\geq}1-\delta$. Applying Theorem~\ref{BPBP-l1-strong} to the $\weak^*$-$\weak^*$-continuous operator $T^*\in S_{\mathfrak{L}({\ell_1(\Gamma,\mathbb{K})})}$, the pair $(x^*,x)$ and $\ee$, gives a new $T_0\in S_{\mathfrak{L}({c_0(\Gamma,\mathbb{K})})}$ and a new pair $(x^*_0,x_0^{**})\in \Pi(\ell_1(\Gamma,\mathbb{K}))$ satisfying
\begin{equation}\label{eq:conditions}
\big\|T^*-T_0^*\big\|\leq\ee,\,\,\norm{x-x_0^{**}}\leq\ee,\,\,\norm{x^*-x^*_0}\leq\ee\text{ and }x_0^{**}\big(T_0^* x_0^*\big)=1.
\end{equation}
Moreover, $x_0^{**}$ is $\weak^*$-continuous, so we can identify it with some $x_0\in S_{c_0(\Gamma,\mathbb{K})}$. Therefore, conditions in~\eqref{eq:conditions} become
\begin{equation*}\label{eq:conditionsb}
\big\|T-T_0\big\|\leq\ee,\,\,\norm{x-x_0}\leq\ee,\,\,\norm{x^*-x^*_0}\leq\ee\text{ and }x_0^*\big(T_0 x_0\big)=1.
\end{equation*}
which are the requirements~{\eqref{eq:defBPBpnu}} in Definition~\ref{BPBp}. Consequently, $c_0(\Gamma,\mathbb{K})$ has the Bishop-Phelps-Bollob\'as property for numerical radius.
\end{proof}

{\bf Acknowledgements:} We would like to thank Professor R. M. Aron for suggesting {the quantitative point of view of numerical radius attaining operators} and for helpful conversations on this matter. We also would like to thank Professor M. Maestre for his suggestions on the transition from the real to the complex case. The first named author is particularly grateful {to} them because of their support {during} his stay at Kent State University.


\bibliography{BPBp-nu}
\bibliographystyle{amsalpha}

\end{document}